\let\frak\mathfrak
\let\Bbb\mathbb
\def\>{\relax\ifmmode\mskip.666667\thinmuskip\relax\else\kern.111111em\fi}
\def\<{\relax\ifmmode\mskip-.333333\thinmuskip\relax\else\kern-.0555556em\fi}
\def\vsk#1>{\vskip#1\baselineskip}
\def\vv#1>{\vadjust{\vsk#1>}\ignorespaces}
\def\vvn#1>{\vadjust{\nobreak\vsk#1>\nobreak}\ignorespaces}
  \let\ssize\scriptstyle
\let\sssize\scriptscriptstyle
\let\Medskip\medskip
\def\medskip{\par\Medskip}
\let\Bigskip\bigskip
\def\bigskip{\par\Bigskip}
\let\Maketitle\maketitle
\def\maketitle{\Maketitle\thispagestyle{empty}\let\maketitle\empty}
\newtheorem{thm}{Theorem}[section]
\newtheorem{cor}[thm]{Corollary}
\newtheorem{lem}[thm]{Lemma}
\theoremstyle{definition}                                  
\numberwithin{equation}{section}
\theoremstyle{definition}
\newtheorem*{rem}{Remark}
\newtheorem*{example}{Example}
\let\mc\mathcal
\let\nc\newcommand
\let\al\alpha
\let\la\lambda
\let\phi\varphi
\let\si\sigma
\let\om\omega
\let\der\partial
\let\ox\otimes
\let\geq\geqslant
\let\leq\leqslant
\let\on\operatorname
\let\bi\bibitem
\let\bs\boldsymbol
\def\C{{\mathbb C}}
\def\Z{{\mathbb Z}}
\def\F{{\mathbb F}}   
\def\+#1{^{\{#1\}}}
\def\beq{\begin{equation}}
\def\eeq{\end{equation}}
\def\be{\begin{equation*}}
\def\ee{\end{equation*}}
\nc{\bea}{\begin{eqnarray*}}
\nc{\eea}{\end{eqnarray*}}
\nc{\bean}{\begin{eqnarray}}
\nc{\eean}{\end{eqnarray}}
\def\n{{\mathfrak n}}
\let\ga\gamma
\let\Ga\Gamma
\nc{\Il}{{\mc I_{\bs\la}}}
\nc{\bla}{{\bs\la}}
\nc{\Fla}{\F_\bla}
\nc{\tfl}{{T^*\Fla}}
\nc{\GL}{{GL_n(\C)}}
\nc{\GLC}{{GL_n(\C)\times\C^*}}
\let\sd s 
\def\ddk_#1{\kk_{#1}\<\>\frac\der{\der\<\>\kk_{#1}}}
\def\bul{\mathbin{\raise.2ex\hbox{$\sssize\bullet$}}}
\def\intt{\mathchoice
{\mathop{\raise.2ex\rlap{$\,\,\ssize\backslash$}{\intop}}\nolimits}
{\mathop{\raise.3ex\rlap{$\,\sssize\backslash$}{\intop}}\nolimits}
{\mathop{\raise.1ex\rlap{$\sssize\>\backslash$}{\intop}}\nolimits}
{\mathop{\rlap{$\sssize\<\>\backslash$}{\intop}}\nolimits}}
\let\kk q 
\let\cc c
\let\Ko K
\def\GZ/{Gelfand-Zetlin}
\def\KZ/{{\slshape KZ\/}}
\def\qKZ/{{\slshape qKZ\/}}
\def\XXX/{{\slshape XXX\/}}
\def\Sym{\on{Sym}}
\nc{\A}{{\mc C}}
\def\Sing{{\on{Sing}}}
\def\slt{{\frak{sl}_2}}
\nc{\hsl}{\widehat{{\frak{sl}_2}}}
\nc{\BC}{{ \mathbb C}}
\nc{\lra}{\longrightarrow}
\nc{\CO}{{\mathcal{O}}}
\nc{\BZ}{{ \mathbb Z}}
\nc{\hfn}{\hat{\frak{n}}}
\def\slth{{\frak{sl}_3}}
\begin{document}

\hrule width0pt
\vsk->

\title[The $\F_p$-Selberg integral of type $A_n$]
{The $\F_p$-Selberg integral of type $A_n$}

\author[R.\,Rim\'anyi and A.\:Varchenko]
{Rich\'ard Rim\'anyi$^{\diamond}$ and  Alexander Varchenko$^{\star}$}

\maketitle

\begin{center}
{\it $^{\diamond, \star}$ Department of Mathematics, University
of North Carolina at Chapel Hill\\ Chapel Hill, NC 27599-3250, USA\/}

\vsk.5>
{\it $^{ \star}$ Faculty of Mathematics and Mechanics, Lomonosov Moscow State
University\\ Leninskiye Gory 1, 119991 Moscow GSP-1, Russia\/}

\vsk.5>
 {\it $^{ \star}$ Moscow Center of Fundamental and Applied Mathematics
\\ Leninskiye Gory 1, 119991 Moscow GSP-1, Russia\/}

\end{center}

\vsk>
{\leftskip3pc \rightskip\leftskip \parindent0pt \Small
{\it Key words\/}:  KZ and dynamical equations,  reduction modulo $p$, Selberg integrals,
$\F_p$-integrals

\vsk.6>
{\it 2010 Mathematics Subject Classification\/}: 13A35 (33C60, 32G20) 
\par}

{\let\thefootnote\relax
\footnotetext{\vsk-.8>\noindent
$^\diamond\<${\sl E\>-mail}:\enspace  rimanyi@email.unc.edu, 
supported in part by Simons Foundation grant 523882
\\
$^\star\<${\sl E\>-mail}:\enspace anv@email.unc.edu,
supported in part by NSF grant DMS-1954266}}

\begin{abstract}

We  prove an $\F_p$-Selberg integral formula of type $A_n$, in which
the $\F_p$-Selberg integral is an element of the finite field $\F_p$ with odd prime number $p$ of elements.
The formula is motivated by analogy between  multidimensional hypergeometric solutions of the
KZ equations and
 polynomial solutions of the same equations reduced modulo $p$. For the type $A_1$ the formula 
 was  proved in a previous paper by the authors.

\end{abstract}

{\small\tableofcontents\par}

\setcounter{footnote}{0}
\renewcommand{\thefootnote}{\arabic{footnote}}

\section{Introduction}

In 1944 Atle Selberg proved the following integral formula:
\bean
\label{clS}
&&
\int_0^1\dots\int_0^1 \prod_{1\leq i<j\leq k} (x_i-x_j)^{2\ga} \prod_{i=1}^kx_i^{\al-1} (1-x_i)^{\beta-1}\ dx_1\dots dx_k
\\
&&
\notag
\phantom{aaaaaa}
=\
\prod_{j=1}^k \frac{\Ga(1+j\ga)}{\Ga(1+\ga)}\,
\frac{\Ga(\al+(j-1)\ga)\,\Ga(\beta+(j-1)\ga)}
{\Ga(\al+\beta + (k+j-2)\ga)}\,,
\eean
see  \cite{Se, AAR}. 
Hundreds of papers are devoted to the generalizations of the Selberg
integral formula and its applications, see for example \cite{AAR, FW} and references 
therein.  There are $q$-analysis versions of the formula,
the generalizations associated with Lie algebras, elliptic versions, finite field versions,
see some references  in \cite{AAR, FW, As, Ha, Ka, Op, Ch, TV1, TV2, TV4, Wa1, Wa2, Sp, R, FSV, An, Ev}.
In the finite field versions,
 one considers additive and multiplicative characters of a finite field, which 
map the  field to the field of
complex numbers, and forms an analog of equation \eqref{clS}, in which  both sides are complex numbers.
The simplest of such formulas is the classical relation between Jacobi and Gauss 
sums, see \cite{AAR, An, Ev}.

\vsk.2>
In  \cite{RV} 
we suggested another  version of the Selberg integral formula, in which the
$\F_p$-Selberg integral is an element of the  finite field $\F_p$ with an odd prime number $p$  of elements.

\vsk.2>
Our motivation in \cite{RV} 
came from the theory of Knizhnik-Zamolod\-chikov 
(KZ) equations, see \cite{KZ, EFK}. These are  the
  systems of linear differential equations, satisfied by
conformal blocks on the sphere in the WZW model of conformal field theory.  
The KZ equations were solved in multidimensional hypergeometric integrals in \cite{SV1}, see also \cite{V1,V2}.
The following general principle was formulated in \cite{MuV}: \ if an example of the KZ-type
 equations has a one-dimensional space of solutions, then the 
corresponding multidimensional hypergeometric integral can be evaluated explicitly. 
As an illustration of that principle in \cite{MuV},  an example of  the $\slt$  differential KZ 
equations with a
one-dimensional space of solutions 
was considered, the corresponding multidimensional hypergeometric integral was reduced to the 
Selberg integral and  then evaluated by formula \eqref{clS}. See other illustrations in \cite{FV, FSV, TV1, TV2, TV4, V3, RTVZ}.

\vsk.2>

Recently in \cite{SV2} the KZ equations were considered modulo a prime number $p$ and polynomial solutions of the 
reduced equations were constructed, see also \cite{SlV, V3, V4, V5, V6, V7}. The construction
 is analogous to the construction
of the multidimensional hypergeometric solutions,
and the constructed polynomial solutions were called 
the  $\F_p$-hypergeometric solutions.

\vsk.2>  

In  \cite{RV} we considered  the reduction modulo $p$ of the same example of the $\slt$ 
differential KZ equations, 
that led in \cite{MuV} to the  Selberg integral. We evaluated the corrersponding $\F_p$-hypergeometric solution 
by analogy with the evaluation of the Selberg integral and
obtained the $\F_p$-Selberg integral formula in  \cite[Theorem 4.1]{RV}.

\vsk.2>
In \cite{TV4} the Selberg integral formula of type $A_2$ was  proved,
\bean
\label{f2}
&&
\int_{C_c^{k_1,k_2}[0,1]}
 \prod_{i=1}^{k_1} t_i^{\al_1-1}(1-t_i)^{\beta_1-1}\, \prod_{j=1}^{k_2}(1-s_j)^{\beta_2-1} 
\,\prod_{i=1}^{k_1}\prod_{j=1}^{k_2}  |s_j-t_i|^{-\ga}
\\
\notag
&&
\phantom{aaaaaaa}
\times\,
\prod_{1\leq i<i'\leq k_1} (t_i-t_{i'})^{2\ga}
\prod_{1\leq j<j'\leq k_2} (s_j-s_{j'})^{2\ga}\, dt_1\dots dt_{k_1}\,ds_1\dots ds_{k_2} 
\\
\notag
&&
= \prod_{i=1}^{k_1-k_2}
\frac{\Ga(\beta_1+(i-1)\ga)}
{\Ga(\al+\beta_1+(i+k_1-2)\ga)}
\\
\notag
 &&
\times\,
\prod_{i=1}^{k_2}\
\frac{\Ga(\beta_2+(i-1)\ga)}
{\Ga(1+\beta_2+(i+k_2-k_1-2)\ga)}\,
\
\frac{\Ga(\beta_1+\beta_2+(i-2)\ga)}
{\Ga(\al+\beta_1+\beta_2+(i+k_2-3)\ga)}
\\
&&
\notag
\times \
\prod_{i=1}^{k_2}
\frac{\Ga(1+(i-k_1-1)\ga)\,\Ga(i\ga)}
{\Ga(\ga)}\
\prod_{i=1}^{k_1}
\frac{\Ga(\al+(i-1)\ga)\,\Ga(i\ga)}
{\Ga(\ga)}\,.
\eean
The starting point of this formula
was an example of the joint system of the $\slth$ trigonometric differential KZ  equations and associated dynamical 
difference equations, an example in which the space of solutions is one-dimensional.
The $A_n$-type Selberg integral formula for aribitrary $n$ was obtained in \cite{Wa1, Wa2}, see also \cite{FW}. 

\vsk.2>
In this paper we consider the reduction modulo $p$ of the same
example of the joint system 
of the $\frak{sl}_{n+1}$ trigonometric differential KZ  equations and associated dynamical 
difference equations, which led in \cite{TV4, Wa1} to the $A_n$-type Selberg integral formula.
Using the reduction modulo $p$ of these differential and difference equations we obtain our
$A_n$-type $\F_p$-Selberg integral formula in Theorem \ref{thm main n}.

\smallskip

The paper is organized as follows. In Section \ref{sec 2} we collect useful facts.
In Section \ref{sec 3} we introduce the notion of $\F_p$-integral and discuss the
integral formula for the $\F_p$-beta integral.  In Section \ref{sec 3} we formulate our main  Theorem
\ref{thm main n}, containing the $A_n$-type 
$\F_p$-Selberg integral formula. For $n=1$ it is \cite[Theorem 4.1]{RV}.
Theorem \ref{thm main n} is proved by induction on
$n$ in the remaining part of the paper. 
 In Section \ref{sec 3} we also prove Theorem \ref{thm ind}, which is used in
the transition from the $A_{n-1}$-type formula to the $A_n$-type formula.  
In Section \ref{sec 4} we sketch the proof of formula \eqref{f2}   following  \cite{TV4}.
In Section \ref{sec proofs} we adapt this proof to prove the $A_2$-type $\F_p$-Selberg integral formula.
 The proof of the $A_n$-type $\F_p$-Selberg integral formula for $n>2$ is similar, see Section \ref{sec sketch}.

\smallskip
The authors thank I.\,Cherednik, P.\,Etingof, E.\,Rains for useful discussions.

\section{Preliminary remarks}
\label{sec 2}

Let $p$ be an odd prime number throughout the paper.

\subsection{Cancellation of factorials}

\begin{lem}
\label{lem ca}

If $a,b$ are nonnegative integers and  $a+b=p-1$, then in $\F_p$ we have
\bean
\label{ca}
a!\,b!\,=\,(-1)^{a+1}\,.
\eean

\end{lem}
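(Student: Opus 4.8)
The plan is to recognize this as a consequence of Wilson's theorem together with the reflection trick for factorials modulo $p$. Recall that Wilson's theorem states $(p-1)! \equiv -1 \pmod p$. Since $a + b = p-1$, I would try to relate the product $a!\,b!$ to $(p-1)!$ by rewriting the factor $b!$ in terms of the ``upper'' factors $a+1, a+2, \dots, p-1$.

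Concretely, the key identity I would exploit is that for each $j$ with $1 \le j \le b$, we have $a + j \equiv -(b - j + 1) \pmod p$, because $a + j + (b - j + 1) = a + b + 1 = p \equiv 0$. First I would write
\beq
(p-1)! \,=\, a!\,\prod_{j=1}^{b}(a+j).
\eeq
Then, applying the congruence $a+j \equiv -(b-j+1)$ to each of the $b$ factors in the product, I replace $\prod_{j=1}^b (a+j)$ by $(-1)^b \prod_{j=1}^b (b-j+1) = (-1)^b\,b!$, since as $j$ runs from $1$ to $b$ the index $b-j+1$ runs over $b, b-1, \dots, 1$. This yields $(p-1)! \equiv (-1)^b\,a!\,b! \pmod p$.

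Combining with Wilson's theorem gives $(-1)^b\,a!\,b! \equiv -1$, hence $a!\,b! \equiv -(-1)^{-b} = (-1)^{b+1} \pmod p$ in $\F_p$. Finally, since $a + b = p-1$ and $p$ is odd, $a+b$ is even, so $a$ and $b$ have the same parity; thus $(-1)^{b+1} = (-1)^{a+1}$, which is exactly the claimed formula \eqref{ca}.

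I do not anticipate a serious obstacle here, as the argument is elementary once the reflection pairing $a+j \equiv -(b-j+1)$ is set up. The one point requiring mild care is the parity bookkeeping at the end: one must use that $p$ is odd (so $a+b = p-1$ is even) to convert the exponent $b+1$ into the symmetric form $a+1$ appearing in the statement. Everything takes place in $\F_p$, so all the congruences above are genuine equalities in the field, and no lifting or integrality issues arise.
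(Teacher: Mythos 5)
Your proof is correct and uses essentially the same argument as the paper: Wilson's theorem combined with the reflection $j \equiv -(p-j) \pmod p$. The only cosmetic difference is the direction of the reflection — the paper rewrites $a!$ as $(-1)^a(p-1)\cdots(p-a)$ and gets $(-1)^{a+1}$ immediately, whereas you reflect the top block $(a+1)\cdots(p-1)$ down into $(-1)^b\,b!$ and then need the extra parity observation ($a+b=p-1$ even) to convert $(-1)^{b+1}$ into $(-1)^{a+1}$.
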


\begin{proof} We have $a!=(-1)^a(p-1)\dots(p-a)$ and $p-a= b+1$. Hence
$a!\,b! = (-1)^a(p-1)!= (-1)^{a+1}$ by Wilson's Theorem.
\end{proof}

\subsection{Dyson's formula}

We shall use Dyson's formula
\bean
\label{DF}
\on{C.T.} \prod_{1\leq i<j\leq k}(1-x_i/x_j)^c (1-x_j/x_i)^c = \frac{(kc)!}{(c!)^k}\,,
\eean
where C.T. denotes the constant term. See the formula in \cite[Section 8.8]{AAR}.

\subsection{$\F_p$-Integrals}
\label{sec 3}

Let  $M$  an $\F_p$-module. Let $P(x_1,\dots,x_k)$ be a polynomial
with coefficients in $M$,
\bean
\label{St}
P(x_1,\dots,x_k) = \sum_{d}\, c_d \,x_1^{d_1}\dots x_k^{d_k}.
\eean
Let $l=(l_1,\dots,l_k)\in \Z_{>0}^k$. The coefficient
$c_{l_1p-1,\dots,l_kp-1}$ is called the {\it $\F_p$-integral over the $p$-cycle $[l_1,\dots,l_k]_p$}
and is denoted by $\int_{[l_1,\dots,l_k]_p} P(x_1,\dots,x_k)\,dx_1\dots dx_k$.

\begin{lem}
\label{lem sym}

For $i=1,\dots,k-1$ we have
\bean
\label{sym}
&&
\int_{[l_1,\dots,l_{i+1}, l_i,\dots, l_k]_p} P(x_1,\dots,x_{i+1},x_i,\dots,x_k) dx_1\dots dx_k
\\
\notag
&&
\phantom{aaaaa}
=\ 
\int_{[l_1,\dots,l_k]_p} P(x_1,\dots,x_k)\,dx_1\dots dx_k\,.
\eean
\qed
\end{lem}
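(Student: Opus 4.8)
The plan is to unwind the definition of the $\F_p$-integral and reduce the claimed symmetry to an elementary statement about coefficients of a polynomial. Recall that for a polynomial $P(x_1,\dots,x_k)=\sum_d c_d\,x_1^{d_1}\dots x_k^{d_k}$ with coefficients in $M$, the $\F_p$-integral over $[l_1,\dots,l_k]_p$ is by definition the single coefficient $c_{l_1p-1,\dots,l_kp-1}$. So the left-hand side of \eqref{sym} is, by definition, the coefficient of $x_1^{l_1p-1}\dots x_{i+1}^{l_{i+1}p-1}x_i^{l_ip-1}\dots x_k^{l_kp-1}$ in the polynomial $Q(x_1,\dots,x_k):=P(x_1,\dots,x_{i+1},x_i,\dots,x_k)$, where the labels $i$ and $i+1$ have been swapped both in the argument slots of $P$ and in the exponent vector prescribed by the $p$-cycle.

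First I would write out $Q$ explicitly as a polynomial. Since $Q$ is obtained from $P$ by interchanging the $i$th and $(i+1)$th arguments, its monomial expansion is $Q=\sum_d c_d\,x_1^{d_1}\dots x_{i-1}^{d_{i-1}}x_i^{d_{i+1}}x_{i+1}^{d_i}x_{i+2}^{d_{i+2}}\dots x_k^{d_k}$; that is, the coefficient attached to the exponent pattern $(e_1,\dots,e_k)$ in $Q$ equals $c_{e_1,\dots,e_{i-1},e_{i+1},e_i,e_{i+2},\dots,e_k}$, the coefficient in $P$ with the $i$ and $i+1$ exponents transposed. The next step is to extract from $Q$ the coefficient prescribed by the cycle on the left-hand side, namely the one at the exponent pattern $(l_1p-1,\dots,l_{i+1}p-1,\,l_ip-1,\dots,l_kp-1)$. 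Applying the coefficient-transposition just described, this coefficient of $Q$ equals the coefficient of $P$ at the pattern obtained by transposing back the $i$th and $(i+1)$th entries, which returns exactly $(l_1p-1,\dots,l_ip-1,\,l_{i+1}p-1,\dots,l_kp-1)$ — the unswapped pattern. But $c_{l_1p-1,\dots,l_kp-1}$ is precisely $\int_{[l_1,\dots,l_k]_p}P\,dx_1\dots dx_k$, which is the right-hand side of \eqref{sym}.

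The content of the lemma is thus that the double swap — transposing the two arguments of $P$ and simultaneously transposing the two indices of the $p$-cycle — cancels out at the level of the single extracted coefficient. The only thing to verify carefully is the bookkeeping that the relabeling of variable slots and the relabeling of the exponent vector are inverse operations, so that composing them is the identity on the targeted coefficient. I do not anticipate a genuine obstacle here: there is no summation to manipulate and no arithmetic in $\F_p$ beyond reading off one coefficient, so the proof is a direct unwinding of definitions. The one point requiring mild attention is to keep the index ranges consistent (in particular that $i$ ranges over $1,\dots,k-1$ so that both slots $i$ and $i+1$ exist), which guarantees the transposition is well defined and the statement is symmetric as claimed.
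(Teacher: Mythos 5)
Your proof is correct: the paper itself gives no argument for this lemma (it is stated with a \qed as immediate from the definition of the $\F_p$-integral), and your careful unwinding — that swapping the two argument slots of $P$ and simultaneously swapping the two entries of the $p$-cycle are inverse relabelings on the extracted coefficient — is exactly the verification the paper leaves implicit.
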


\begin{lem}
\label{lem St}
For any $i=1,\dots,k$, we have
\bea
\int_{[l_1,\dots,l_k]_p} \frac{\der P}{\der x_i}(x_1,\dots,x_k)\,dx_1\dots dx_k = 0\,.
\eea
\qed
\end{lem}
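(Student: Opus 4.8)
The statement to prove is Lemma \ref{lem St}, which asserts that the $\F_p$-integral of a partial derivative vanishes.

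The plan is to reduce the claim to a single observation about the coefficient that the $\F_p$-integral extracts. Recall that by definition the $\F_p$-integral over the $p$-cycle $[l_1,\dots,l_k]_p$ of a polynomial $P$ is precisely the coefficient $c_{l_1p-1,\dots,l_kp-1}$ of the monomial $x_1^{l_1p-1}\dots x_k^{l_kp-1}$ in the standard expansion \eqref{St}. So the integral of $\der P/\der x_i$ is the coefficient of that same monomial $x_1^{l_1p-1}\dots x_k^{l_kp-1}$ in $\der P/\der x_i$. Since differentiation and coefficient-extraction are both $\F_p$-linear, it suffices to verify the claim on a single monomial $x_1^{d_1}\dots x_k^{d_k}$ and then sum.

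The key step is then purely combinatorial. Differentiating a monomial $x_1^{d_1}\dots x_k^{d_k}$ with respect to $x_i$ produces $d_i\,x_1^{d_1}\dots x_i^{d_i-1}\dots x_k^{d_k}$. For this term to contribute to the coefficient of $x_1^{l_1p-1}\dots x_k^{l_kp-1}$, we need $d_j=l_jp-1$ for $j\neq i$ and $d_i-1=l_ip-1$, i.e. $d_i=l_ip$. But then the numerical factor coming from the derivative is $d_i=l_ip$, which is congruent to $0$ in $\F_p$. Hence each monomial of $P$ contributes $0$ to the coefficient in question, and summing over all monomials gives $0$. This is the whole content of the lemma.

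I do not expect any serious obstacle here; the only point requiring a moment's care is the bookkeeping of exponents, namely that the exponent appearing after differentiation must be shifted by one and that the surviving case forces $d_i=l_ip$, whose coefficient $l_ip$ vanishes modulo $p$. Everything else is immediate from $\F_p$-linearity of the coefficient functional and of the derivation $\der/\der x_i$.
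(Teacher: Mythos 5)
Your proof is correct: the paper states this lemma with no proof (just a \qed, treating it as immediate), and your argument is exactly the intended one. The observation that a contributing monomial must have $d_i=l_ip$, whose derivative coefficient $l_ip$ annihilates any element of the $\F_p$-module $M$, is the whole content of the lemma.
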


Let $\bs k=(k_1,\dots,k_n) \in \Z^n_{> 0}$ and 
\bean
\label{def  bsk}
[\bs k]_p := [(1)_{k_1};(k_1)_{k_2};\dots;
(k_{n-1})_{k_{n}}]_p,
\eean
where $x_y$ denotes the $y$-tuple  $(x,\dots,x)$.
For example for $n=2$, $\bs k=(3,2)$, we have $[\bs k]_p = [1,1,1;3,3]_p$.

\subsection{$\F_p$-Beta integral}
For nonnegative integers the classical beta integral formula says
\bean
\label{bk}
\int_{0}^{1} x^{a}(1-x)^{b} dx =\frac{a!\,b!}{(a+b+1)!}\,.
\eean

\begin{thm} [\cite{V7}]
\label{thm bfun}
Let
$a<p$, $b<p$,  $p-1\leq a+b$. 
Then in $\F_p$ we have
\bean
\label{bf1}
\int_{[1]_p} x^a(1-x)^b dx\,
=  \, - \,\frac{a!\,b!}{(a+b+1-p)!}\,.
\eean
If $a+b<p-1$, then 
\bean
\label{bf2}
\int_{[1]_p} x^a(1-x)^b dx\,
=  0\,.
\eean

\end{thm}

\section{ $\F_p$-Selberg integral of type $A_n$} 
\label{sec 3}

\subsection{Admissible parameters}
\label{sec adp}

Let $\bs k=(k_1,\dots,k_n) \in \Z^n_{>0}$\, and\, $k_{i}> k_{i+1}$, $i=1,\dots,n-1$. Set $k_0=k_{n+1}=0$.

\vsk.2>
Let $a,b_1,\dots,b_n,c\in \Z_{>0}$.
  Denote $b=(b_1,\dots,b_n)$
and
\bean
\label{Rbk}
&&
\\
\notag
&&
R_{\bs k} (a,b,c) = 
\\
\notag
&&
=
\prod_{1\leq s\leq r\leq n}\prod_{i=1}^{k_r-k_{r+1}}
\frac{(r-s + b_s+\dots+b_r + (i+s-r-1)c)!}
{(r-s+1 + a_s+b_s+\dots+b_r + (i+s-r+k_s-k_{s-1}-2)c-\delta_{s,1}p)!}
\\
\notag
&&
\times \
(-1)^{\sum_{i=1}^nk_i} \Big(\prod_{i=1}^{k_1}(a_1+(i-1)c)!\Big)
\Big(\prod_{r=1}^{n}\prod_{i=1}^{k_r} \frac{(ic)!}{c!}\Big)
\Big(\prod_{r=2}^{n}\prod_{i=1}^{k_r} (p+(i-k_{r-1}-1)c)! \Big),
\eean
 where $a_1=a$, \,$a_2=\dots=a_{n} =0$;\,  $\delta_{s,1}$ is $1$ if $s=1$ and is zero otherwise.

\vsk.4>

 We say that $a,b_1,\dots,b_n,c\in \Z_{>0}$ are {\it admissible} if
 $a+(k_1-1)c < p-1$ and  for any factorial $x!$ in the right-hand side of  \eqref{Rbk} we have
 $0\leq x<p$. The set of all admissible $(a,b,c)$ 
 is denoted by $\mc A_{\bs k}$. 
\begin{lem}
\label{lem apar}   
The set $\mc A_{\bs k}$ is defined
in $\Z_{> 0}^{n+2}$ 
by the following system of inequalities:
\bean
\label{ine1}
&&
0\leq r-s + b_s+\dots+b_r + (s-r)c, 
\\
\notag
&&
 r-s + b_s+\dots+b_r + (k_r-k_{r+1}+s-r-1)c\leq p-1,
\eean
for $1\leq s\leq r\leq n$;
\bean
\label{ine2}
&&
0\leq r-s +1+b_s+\dots+b_r + (s-r+k_s-k_{s-1}-1)c,
\\
\notag
&&
r-s+1 +b_s+\dots+b_r + (s-r+k_r-k_{r+1}+k_s-k_{s-1}-2)c\leq p-1,
\eean
for $2\leq s\leq r\leq n$;
\bean
\label{ine13}
&&
p\leq r +a+b_1+\dots+b_r + (k_{1} - r)c,
\\
\notag
&&
r +a+b_1+\dots+b_r + (k_r-k_{r+1}+k_{1}-r-1)c < 2p,
\eean
for $1\leq r\leq n$;
\bean
\label{ine14}
a+(k_1-1)c < p-1,\quad b_1\geq p-1-(a+(k_1-1)c), \quad
0 < k_1c < p\,.
\eean
\qed
\end{lem}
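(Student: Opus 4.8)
The plan is to unwind the definition of admissibility directly: a point $(a,b,c)\in\Z_{>0}^{n+2}$ lies in $\mc A_{\bs k}$ precisely when $a+(k_1-1)c<p-1$ and every factorial $x!$ occurring in the right-hand side of \eqref{Rbk} satisfies $0\le x<p$. Thus the whole statement is a translation of finitely many constraints of the form $0\le x<p$ into the explicit inequalities \eqref{ine1}--\eqref{ine14}. The one structural observation that makes this manageable is that each family of factorials is indexed by an inner variable $i$, and in every case the argument $x$ is an affine function of $i$ with slope a positive multiple of $c$; since $c>0$, the argument is strictly increasing in $i$. Consequently, for a family ranging over $i=1,\dots,m$, the condition ``$0\le x$ for all $i$'' is equivalent to its instance at $i=1$, and ``$x\le p-1$ for all $i$'' is equivalent to its instance at $i=m$. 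Because each of these reductions is an equivalence, no separate converse is needed; I would only need to evaluate each argument at the two endpoints of its $i$-range.

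Carrying this out for the double product $\prod_{1\le s\le r\le n}\prod_{i=1}^{k_r-k_{r+1}}$, the numerator factorial has argument $r-s+b_s+\dots+b_r+(i+s-r-1)c$; evaluating at $i=1$ and $i=k_r-k_{r+1}$ and imposing $0\le x$ and $x\le p-1$ reproduces exactly the pair \eqref{ine1}. For the denominator factorial I would split on the value of $s$. When $s\ge2$ the Kronecker symbol $\delta_{s,1}$ vanishes, and the endpoint conditions on $r-s+1+b_s+\dots+b_r+(i+s-r+k_s-k_{s-1}-2)c$ yield \eqref{ine2}. When $s=1$ the argument carries the extra $-p$ from $\delta_{s,1}p$; writing it as $X-p$, the condition $0\le X-p<p$ is $p\le X<2p$, and substituting $a_1=a$, $k_0=0$ and evaluating $X$ at the endpoints $i=1$ and $i=k_r-k_{r+1}$ produces \eqref{ine13} for $1\le r\le n$.

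It then remains to treat the three single-index products. The factorials $(a+(i-1)c)!$ with $i=1,\dots,k_1$ are nonnegative automatically and are all $<p$ iff their top value $a+(k_1-1)c$ is; this bound is subsumed by the sharper admissibility requirement $a+(k_1-1)c<p-1$, the first part of \eqref{ine14}. The factorials $(ic)!$ and $c!$ are controlled by the largest argument $k_1c$ (using $k_1=\max_r k_r$ and $c\le k_1c$), giving $0<k_1c<p$ in \eqref{ine14}. The crucial point is that the last product $\prod_{r=2}^n\prod_{i=1}^{k_r}(p+(i-k_{r-1}-1)c)!$ imposes no new constraint: since the hypothesis $k_{r-1}>k_r$ forces $i\le k_r\le k_{r-1}-1$, the argument is automatically $<p$, while $k_{r-1}\le k_1$ together with $k_1c<p$ gives $p-k_{r-1}c>0$, so the argument is automatically nonnegative. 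This is the one place where the strict monotonicity $k_i>k_{i+1}$ is essential, and checking that these factorials drop out is the main (though still elementary) obstacle. Finally, I would observe that the remaining inequality $b_1\ge p-1-(a+(k_1-1)c)$ in \eqref{ine14} is exactly the $r=1$ instance of the first inequality in \eqref{ine13}, so collecting all endpoint conditions gives precisely the system \eqref{ine1}--\eqref{ine14}.
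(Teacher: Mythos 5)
Your proposal is correct, and it supplies exactly the verification that the paper omits: the lemma is stated with no proof (the \qed follows the statement directly), being regarded as an immediate unwinding of the definition of $\mc A_{\bs k}$. Your systematic treatment — using that each factorial argument is affine in the inner index $i$ with positive slope $c$, so the conditions $0\le x\le p-1$ reduce equivalently to the endpoint instances, splitting the denominator on $\delta_{s,1}$, noting $b_1\ge p-1-(a+(k_1-1)c)$ is the $r=1$ case of \eqref{ine13}, and checking that the product $\prod_{r=2}^{n}\prod_{i=1}^{k_r}(p+(i-k_{r-1}-1)c)!$ imposes no new constraint once $k_{r-1}>k_r$ and $k_1c<p$ hold — is precisely the intended argument.
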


\begin{lem}
\label{lem ex p}
Assume that $(a,b,c) \in \mc A_{\bs k}$. Then
\bean
\label{ex p}
b_1\geq p-1-(a+(k_1-1)c), \qquad
b_s\geq (k_{s-1} - k_s+1)c-1, \quad s=2,\dots,n.
\eean
\end{lem}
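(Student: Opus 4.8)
The plan is to read both bounds directly off the explicit inequality description of $\mc A_{\bs k}$ recorded in Lemma \ref{lem apar}, so that no computation beyond specializing the summation indices is required. The whole point of having Lemma \ref{lem apar} already available is that admissibility has been rewritten as the concrete system \eqref{ine1}–\eqref{ine14}, and the two asserted inequalities are among the simplest consequences of that system.

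First I would dispose of the bound on $b_1$: the inequality $b_1\geq p-1-(a+(k_1-1)c)$ is literally the second inequality listed in \eqref{ine14}, which is part of the definition of $\mc A_{\bs k}$. Hence, once we invoke Lemma \ref{lem apar}, this half is immediate and needs no argument at all.

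For the family $b_s\geq (k_{s-1}-k_s+1)c-1$ with $2\leq s\leq n$, I would specialize the first inequality of \eqref{ine2} to the diagonal case $r=s$. Then the chain $b_s+\dots+b_r$ collapses to the single term $b_s$, the quantity $r-s+1$ becomes $1$, and the coefficient $(s-r+k_s-k_{s-1}-1)c$ becomes $(k_s-k_{s-1}-1)c$, so \eqref{ine2} reads $0\leq 1+b_s+(k_s-k_{s-1}-1)c$. Rearranging gives $b_s\geq -1-(k_s-k_{s-1}-1)c=(k_{s-1}-k_s+1)c-1$, which is exactly the claimed inequality. The index constraint $2\leq s\leq r\leq n$ in \eqref{ine2} is satisfied by $r=s$ for every $s\in\{2,\dots,n\}$, so this specialization is legitimate across the full stated range.

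I expect essentially no obstacle here: the only real step is recognizing that the $r=s$ slice of \eqref{ine2} isolates $b_s$ cleanly, whereas the off-diagonal cases of \eqref{ine1}–\eqref{ine2} and the inequalities \eqref{ine13} entangle several of the $b_r$ together and are not needed for this lemma. Thus Lemma \ref{lem ex p} is an immediate corollary of the inequality description in Lemma \ref{lem apar}, extracted by setting $r=s$ (respectively by quoting \eqref{ine14}), and its role is presumably to package these two lower bounds for convenient use in the later induction on $n$.
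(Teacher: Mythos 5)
Your proof is correct and follows essentially the same route as the paper: both read the two bounds off the inequality system of Lemma \ref{lem apar}, with the $b_s$ bound ($s\geq 2$) obtained exactly as in the paper by setting $r=s$ in the first inequality of \eqref{ine2}. The only cosmetic difference is that for $b_1$ you quote \eqref{ine14} directly, while the paper specializes the first inequality of \eqref{ine13} to $r=1$; these are the same inequality, so the arguments coincide.
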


\begin{proof}
The inequality
$b_s\geq (k_{s-1} - k_s+1)c-1$ for $s=2,\dots,n$ follows from the first inequality in \eqref{ine2} for $r=s$.
The inequality $b_1\geq p-1-(a+(k_1-1)c)$ follows from the first inequality in \eqref{ine13} for $r=1$.
\end{proof}

\begin{example}
Let $n=1$, $\bs k =(k_1)$. Then
\bean
\label{Rbk1}
R_{(k_1)} (a,b_1,c) = 
\prod_{i=1}^{k_1} \frac{(ic)!}{c!}\,
\frac{(a+(i-1)c)!\, (b_1+ (i-1)c)!}
{(1 + a+b_1 + (i+k_1-2)c-p)!}
\eean
and 
$\mc A_{(k_1)}$ consists of $a,b, c\in \Z_{>0}$ such that 
\bean
\label{abc 1 ineq}
&&
a+(k_1-1)c < p-1,
\qquad\qquad
b_1+(k_1-1)c \leq p-1,\qquad k_1c\,\leq \,p-1,
\\
&&
\notag
 p-1\leq a+b_1+(k_1-1)c,
\qquad a+b_1+(2k_1-2)c < 2p-1\ .
\eean

\end{example}

 \subsection{Main theorem}
 
\vsk.2>
Given $\bs k=(k_1,\dots,k_n) \in \Z^n_{> 0}$ introduce 
$k_1+\dots+k_n$ variables 
\bean
\label{def t}
 t=(t^{(1)}, \dots,t^{(n)}), \quad
\on{where}\quad t^{(i)} = (t^{(i)}_1,\dots, t^{(i)}_{k_i}), \quad\, i=1,\dots,n.
\eean
Define  the {\it master polynomial}
\bea
\Phi_{\bs k}(t; a,b,c) = 
\prod_{i=1}^n\Big(
\prod_{j=1}^{k_i} (t^{(i)}_j)^{a_i}(1-t^{(i)}_j)^{b_i}
\prod_{1\leq j<j'\leq k_i}(t^{(i)}_{j}  -t^{(i)}_{j'})^{2c}\Big)
\prod_{i=1}^{n-1} \prod_{j=1}^{k_{i+1}}\prod_{j'=1}^{k_{i}}
 (t^{(i+1)}_{j} - t^{(i)}_{j'})^{p-c}\,.
 \eea
Denote
\bean
\label{def Sk}
S_{\bs k}(a,b,c) = \int_{[\bs k]_p} \Phi_{\bs k}\,d t\, .
\eean
The $\F_p$-integral $S_{\bs k}(a,b,c) $
is called the {\it $\F_p$-Selberg integral of type $A_n$.}

\vsk.2>
Here is our main result.

 \begin{thm}
 \label{thm main n}
 Let $\bs k=(k_1,\dots,k_n) \in \Z^n_{>0}$\,,\,  $k_{i}> k_{i+1}$, $i=1,\dots,n-1$.
 Then for   any $(a,b,c) \in \mc A_{\bs k}$ we have the equality in $\F_p$:
 \bean
\label{main n}
S_{\bs k}(a,b,c)\, = \,R_{\bs k}(a,b,c)\,.
\eean
 \end{thm}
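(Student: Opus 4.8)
The plan is to prove Theorem \ref{thm main n} by induction on $n$, following the classical strategy that reduces an $A_n$-type Selberg integral to an $A_{n-1}$-type integral together with a one-dimensional beta-type evaluation. The base case $n=1$ is precisely \cite[Theorem 4.1]{RV}, so the main work is the inductive step. The master polynomial $\Phi_{\bs k}$ factors into a ``top-layer'' part depending on the variables $t^{(n)}=(t^{(n)}_1,\dots,t^{(n)}_{k_n})$ and a ``bottom-layer'' part in $t^{(1)},\dots,t^{(n-1)}$, coupled only through the product $\prod_{j}\prod_{j'}(t^{(n)}_j-t^{(n-1)}_{j'})^{p-c}$. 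The idea is to first perform the $\F_p$-integration over the variables of one layer, producing a new master polynomial for a smaller rank whose coupling exponent and parameters are shifted in a controlled way. In the continuous $A_n$ Selberg setting of \cite{TV4, Wa1} this is exactly how the integral telescopes, and Theorem \ref{thm ind} (announced in the introduction as the tool ``used in the transition from the $A_{n-1}$-type formula to the $A_n$-type formula'') is designed to carry out precisely this reduction at the level of $\F_p$-integrals.

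Concretely, I would first integrate out the top layer $t^{(n)}$. For fixed values of the remaining variables, the integrand in $t^{(n)}$ is $\prod_{j=1}^{k_n}(t^{(n)}_j)^{a_n}(1-t^{(n)}_j)^{b_n}(\text{Vandermonde})^{2c}\cdot\prod_{j,j'}(t^{(n)}_j-t^{(n-1)}_{j'})^{p-c}$, which is a rank-$k_n$ $\F_p$-Selberg-type integrand with an extra product of linear factors coming from the coupling. Extracting the coefficient $c_{l_1p-1,\dots,l_{k_n}p-1}$ in these variables (the definition of the $\F_p$-integral) should, via the cancellation Lemma \ref{lem ca} and the $\F_p$-beta evaluation of Theorem \ref{thm bfun}, collapse the $t^{(n)}$-dependence into an explicit product of factorials times a residual polynomial in $t^{(1)},\dots,t^{(n-1)}$ that is again of master-polynomial shape but with $b_{n-1}$ (and the layer-$(n-1)$ normalization) appropriately modified. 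The symmetry Lemma \ref{lem sym} and the vanishing-of-derivatives Lemma \ref{lem St} are the mechanism for the analogue of the Aomoto-type recursion / integration-by-parts that realizes this collapse over $\F_p$; this is where Dyson's formula \eqref{DF} enters to evaluate the constant-term contribution of the Vandermonde factor $(t^{(n)}_j-t^{(n)}_{j'})^{2c}$.

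After the reduction I would match the resulting expression against $R_{\bs k}(a,b,c)$. The product defining $R_{\bs k}$ in \eqref{Rbk} is organized so that the factors indexed by $r=n$ (the terms with $1\le s\le r=n$ in the double product, together with the rank-$n$ contributions in the last three lines) separate off as exactly the explicit factorial output of the $t^{(n)}$-integration, while the remaining factors indexed by $r\le n-1$ assemble into $R_{(k_1,\dots,k_{n-1})}$ with the shifted parameters. Verifying this factorization is a bookkeeping matter: one checks that the shifts produced by the beta evaluation (governed by $p-c$ in the coupling and by the admissibility inequalities \eqref{ine1}--\eqref{ine14}) line up index-by-index with the arguments of the factorials in $R_{\bs k}$, and that admissibility $(a,b,c)\in\mc A_{\bs k}$ guarantees $(a,b',c)\in\mc A_{(k_1,\dots,k_{n-1})}$ for the shifted $b'$, so the inductive hypothesis applies. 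Lemma \ref{lem ex p}, which records the lower bounds $b_s\ge(k_{s-1}-k_s+1)c-1$ and $b_1\ge p-1-(a+(k_1-1)c)$, is exactly what is needed to stay inside the range where Theorem \ref{thm bfun} gives the nonzero value $-a!\,b!/(a+b+1-p)!$ rather than $0$.

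The main obstacle I anticipate is the reduction step itself, i.e.\ establishing the $\F_p$ analogue of the continuous integration-over-one-layer formula (Theorem \ref{thm ind}). Over $\C$ the telescoping relies on rational-function identities and genuine integration; over $\F_p$ one is extracting a single Taylor coefficient of a polynomial, so the ``integration'' is a purely algebraic residue operation, and the delicate point is controlling the interaction between the coupling factor $(t^{(n)}_j-t^{(n-1)}_{j'})^{p-c}$ (whose exponent involves $p$) and the $p$-cycle indices $l_ip-1$. Ensuring that no spurious carries or vanishing occur, and that the admissibility hypotheses precisely pin down which coefficient survives, is where the argument will require the most care; the strategy announced in Sections \ref{sec proofs} and \ref{sec sketch} is to transplant the $A_2$ computation (adapted from the $\slth$ KZ/dynamical system in \cite{TV4}) and then observe that the general $n$ case proceeds by the same layer-by-layer mechanism.
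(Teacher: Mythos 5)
There is a genuine gap, and it sits exactly where you anticipated trouble: the layer-by-layer reduction you propose as the inductive step does not exist for arbitrary admissible parameters, and Theorem~\ref{thm ind} is \emph{not} designed to carry it out in general. That theorem carries the hypothesis $b_n=(k_{n-1}-k_n+1)c-1$, and this is essential: the $\F_p$-integral over $t^{(n)}$ extracts the coefficient of $\prod_j (t^{(n)}_j)^{k_{n-1}p-1}$, and only at this extremal value of $b_n$ does the total degree of the integrand in the $t^{(n)}$-variables equal $k_n(k_{n-1}p-1)$ exactly, so that the integral localizes on the unique top-degree monomial and Dyson's formula \eqref{DF} evaluates the resulting constant term. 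For $b_n$ strictly larger (which admissibility permits), the coefficient extraction in $t^{(n)}$ produces a sum over many monomials, each carrying a different dependence on the external variables $t^{(n-1)}_{j'}$ through the coupling factor $(t^{(n)}_j-t^{(n-1)}_{j'})^{p-c}$; the result is not a product of factorials times a lower-rank master polynomial, and Theorem~\ref{thm bfun} gives no beta-type collapse (over $\C$ the analogous inner integral with external points likewise fails to factor, which is why the $A_n$ Selberg integral does not telescope naively). So your inductive step, as stated, establishes the formula only at one distinguished point of $\mc A_{\bs k}$ for each $(a,c)$ --- this is precisely the content of Corollary~\ref{cor ind} in the paper --- and not on all of $\mc A_{\bs k}$.

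The missing idea is the second, independent ingredient the paper uses to propagate from that distinguished point to the whole admissible set: difference equations in the parameters $b_1,\dots,b_n$. One introduces the weight-function integrals $I_{l_1,l_2,m}$ (or $I_u$ for general $n$), proves via Lemma~\ref{lem St} that they assemble into a singular vector of an $\frak{sl}_{n+1}$-module over $\F_p$ (the mod-$p$ analogue of the Stokes-theorem argument), and reduces the dynamical difference equations of \cite{TV3,MaV} modulo $p$; this yields the shift relations of Theorem~\ref{thm relp} and hence Lemma~\ref{lem S-1} for $S_{\bs k}$, which are matched against the identical relations for $R_{\bs k}$ (Lemma~\ref{lem R-1}). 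Combined with the convexity/connectivity of $\mc A_{\bs k}$ under the steps $b_i\to b_i-1$ (Lemma~\ref{lem convex}, Corollary~\ref{cor 5.7}), this forces $S_{\bs k}=R_{\bs k}$ everywhere. Your proposal cites these KZ/dynamical ingredients only as background for how the paper handles the reduction step, but in the actual proof they are not a substitute route to your reduction: they are the main argument, and the reduction (Theorem~\ref{thm ind}) serves merely to anchor it at a single initial point. Also, a small misattribution: Lemma~\ref{lem sym} and Lemma~\ref{lem St} are not an Aomoto-style integration-by-parts mechanism for collapsing a layer; Lemma~\ref{lem St} is what makes the singular-vector and dynamical relations survive reduction mod $p$.
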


For $n=1$ this theorem is \cite[Theorem 4.1]{RV}. For $n=2$ the theorem is deduced from the case 
of $n=1$  in Section \ref{sec proofs}. More generally, for any $k$  the theorem for  $n=k$  is deduced 
from the theorem for $n=k-1$ similarly, see Section \ref{sec sketch}.

\begin{rem}
Theorem \ref{thm main n} can be extended to the case of $\bs k$ such that
$k_{i}\geq k_{i+1}$, $i=1,\dots,n-1$, but the structure of inequalities in Lemma \ref{lem apar}  will depend on
the appearance of equalities $k_{i} = k_{i+1}$ in $\bs k$, and the proof of Theorem \ref{thm main n} will split into different 
sub-cases. To shorten the exposition we restrict ourselves with $\bs k$ such that $k_i>k_{i+1}$, $i=1,\dots,n-1$.

\end{rem}

\begin{example}
Here is the simplest  $A_2$-type $\F_p$-Selberg integral formula with $k_1=k_2=1$.

\begin{thm}
\label{thm 3-11}
Assume that $a,b_1,b_2, c$ are integers such that
\bea
&&
0\leq a <p,
\quad
 0< c\leq p,\quad
 0\leq b_2-c+1<p,
\\
&&
0\leq b_1+b_2-c+1<p,
\quad 
p-1\leq a+b_1+b_2 -c +1< 2p-1.
\eea
Then in $\F_p$ we have
\bean
\label{3-11}
&&
\int_{[1;1]_p}
t^{a}(1-t)^{b_1}(s-t)^{p-c} (1-s)^{b_2} dt\,ds
= \ 
\frac{a!\,(b_1+b_2-c+1)!}{(a+b_1+b_2-c+2-p)!}\,
\frac{(p-c)!\,(b_2)!}{(b_2-c+1)!}\,.
\eean

\end{thm}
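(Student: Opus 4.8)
The plan is to reduce the two-dimensional $\F_p$-integral to one-dimensional $\F_p$-beta integrals and then to evaluate a single alternating sum. Write $I$ for the left-hand side of \eqref{3-11}; by definition $I$ is the coefficient of $t^{p-1}s^{p-1}$ in $t^a(1-t)^{b_1}(s-t)^{p-c}(1-s)^{b_2}$. Expanding $(s-t)^{p-c}=\sum_m\binom{p-c}{m}(-1)^{p-c-m}t^{p-c-m}s^{m}$ by the binomial theorem separates the two variables, so the coefficient factorizes and
\bea
I &=& \sum_m\binom{p-c}{m}(-1)^{p-c-m}
\Big(\int_{[1]_p}t^{a+p-c-m}(1-t)^{b_1}\,dt\Big)
\Big(\int_{[1]_p}s^{m}(1-s)^{b_2}\,ds\Big).
\eea

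Each factor is now an $\F_p$-beta integral, to which I would apply Theorem \ref{thm bfun}. A term is nonzero only when the two exponents lie in $\{0,\dots,p-1\}$ and each exponent-pair has sum at least $p-1$; in every other case the corresponding coefficient vanishes, which is exactly the content of \eqref{bf2}, so nothing is lost by restricting to the admissible range. Evaluating the surviving factors by \eqref{bf1}, using $\binom{p-c}{m}\,m!=(p-c)!/(p-c-m)!$, and substituting $r=p-c-m$ collapses the double product into
\be
I \,=\, (p-c)!\,b_1!\,b_2!\,\Sigma,\qquad
\Sigma \,=\, \sum_r(-1)^r\,
\frac{(a+r)!}{r!\,(a+b_1+1-p+r)!\,(b_2-c+1-r)!}\,.
\ee

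To evaluate $\Sigma$ I would rewrite $1/\big(r!\,(b_2-c+1-r)!\big)=\binom{b_2-c+1}{r}/(b_2-c+1)!$ and $(a+r)!/(a+b_1+1-p+r)!=(p-1-b_1)!\,\binom{a+r}{p-1-b_1}$, so that $\Sigma$ becomes $(p-1-b_1)!/(b_2-c+1)!$ times the finite difference $\sum_r(-1)^r\binom{b_2-c+1}{r}\binom{a+r}{p-1-b_1}$. Since $\binom{a+r+1}{k}-\binom{a+r}{k}=\binom{a+r}{k-1}$, iterating the difference operator $b_2-c+1$ times gives
\be
\sum_r(-1)^r\binom{b_2-c+1}{r}\binom{a+r}{p-1-b_1}
=(-1)^{b_2-c+1}\binom{a}{p-b_1-b_2+c-2}\,,
\ee
an identity of integers, hence valid in $\F_p$; the hypotheses $p-1\le a+b_1+b_2-c+1$ and $b_1+b_2-c+1\le p-1$ guarantee that the lower index $p-b_1-b_2+c-2$ lies in $\{0,\dots,a\}$.

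Finally I would match $I=(p-c)!\,b_1!\,b_2!\,\Sigma$ with the right-hand side of \eqref{3-11}. Writing $\binom{a}{p-b_1-b_2+c-2}=a!/\big((p-b_1-b_2+c-2)!\,(a+b_1+b_2-c+2-p)!\big)$ and cancelling the factors common to both sides, the claimed equality reduces to
\be
(-1)^{b_2-c+1}\,b_1!\,(p-1-b_1)!\,=\,(b_1+b_2-c+1)!\,(p-b_1-b_2+c-2)!\,,
\ee
and both sides equal $(-1)^{b_1+b_2-c}$ by Lemma \ref{lem ca}, applied to the pairs $(p-1-b_1,\,b_1)$ and $(p-b_1-b_2+c-2,\,b_1+b_2-c+1)$, each summing to $p-1$ and each consisting of nonnegative integers by the hypotheses. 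The main obstacle is the bookkeeping at the boundary: one must verify that the range of $r$ for which all factorial arguments lie in $\{0,\dots,p-1\}$ coincides exactly with the terms handled by Theorem \ref{thm bfun} through \eqref{bf1} rather than \eqref{bf2}, and that the polynomial $\binom{a+r}{p-1-b_1}$ correctly encodes the vanishing of the out-of-range terms; the remaining factorial manipulations modulo $p$ are then routine.
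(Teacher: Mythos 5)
Your proof is correct, but it takes a genuinely different route from the paper's. The paper's own proof is a two-line reduction: the substitution $s=t+(1-t)v$ turns the integrand into $t^{a}(1-t)^{b_1+b_2-c+1+p}\,v^{p-c}(1-v)^{b_2}$ (the superfluous $(1-t)^p$ is harmless, since $(1-t)^p\equiv 1-t^p \pmod p$ and multiplication by $1-t^p$ does not change the coefficient of $t^{p-1}$), so the double $\F_p$-integral factors as a product of two $\F_p$-beta integrals, and two applications of Theorem \ref{thm bfun} give the right-hand side of \eqref{3-11} at once, the two minus signs of \eqref{bf1} cancelling; no summation identity and no appeal to Lemma \ref{lem ca} is needed. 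You instead work directly from the definition of the $\F_p$-integral as a coefficient functional: binomial expansion of $(s-t)^{p-c}$, termwise evaluation by Theorem \ref{thm bfun}, the finite-difference identity $\sum_r(-1)^r\binom{n}{r}\binom{a+r}{k}=(-1)^n\binom{a}{k-n}$, and Wilson-type cancellation via Lemma \ref{lem ca}. What your route buys: it never invokes a change of variables inside an $\F_p$-integral, which is not a formal triviality (it is a substitution rule for coefficient extraction, justified in \cite{RV} and used silently in this paper), so your argument is self-contained given Theorem \ref{thm bfun} and Lemma \ref{lem ca}; what it costs is the extra combinatorial identity and the boundary bookkeeping you flag at the end. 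For completeness, that bookkeeping does close: the only discrepancy between the range of $\Sigma$ and the range where \eqref{bf1} applies consists of terms with $a+r\ge p$ (where the true coefficient vanishes trivially), and these drop out of $\Sigma$ modulo $p$ because $r\le b_2-c+1$ together with the hypothesis $a+b_1+b_2-c+1<2p-1$ forces $a+b_1+2-p+r\le p\le a+r$, so $p$ divides $(a+r)!/(a+b_1+1-p+r)!$ while every factorial in the denominators stays prime to $p$; equivalently, $\binom{a+r}{p-1-b_1}\equiv 0\pmod p$ for those $r$, exactly as you anticipated. (One shared caveat: both your argument and the paper's apply Theorem \ref{thm bfun} with second exponent $b_2$, hence implicitly assume $b_2<p$, which the stated hypotheses of Theorem \ref{thm 3-11} do not literally force when $c>1$; this is an issue of the theorem's formulation, not of your proof.)
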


\begin{proof}
Change variables $s=t+(1-t)v$, then the $\F_p$-integral becomes equal to
\bea
\int_{[1,1]_p}
t^{a}(1-t)^{b_1+b_2-c+1} v^{p-c} (1-v)^{b_2} dt\,dv .
\eea
Applying the $\F_p$-beta integral formula we obtain the theorem.
\end{proof}

The simplest $A_3$-type $\F_p$-Selberg integral formula  $k_1=k_2=k_3=1$
is given by the next theorem.

\begin{thm}
\label{thm 4-111}

Let $a,b_1,b_2,b_3,c$  be integers such that 
all factorials in the right-hand side of formula \eqref{4-111} are factorials of nonnegaive integers less than $p$.
Then in $\F_p$ we have
\bean
\label{4-111}
&&
\int_{[1;1;1]_p}
t^{a}(1-t)^{b_1}(s-t)^{p-c} (1-s)^{b_2}(u-s)^{p-c} (1-u)^{b_2} dt\,ds\,du
\\
\notag
&&
\phantom{aa}
= \ -\
\frac{a!\,(b_1+b_2+b_3-2c+2)!}{(a+b_1+b_2+b_3-2c+3-p)!}\,
\frac{(p-c)!\,(b_2+b_3-c+1)!}{(b_2+b_3-2c+2)!}\,
\frac{(p-c)!\,(b_3)!}{(b_3-c+1)!}\,.
\eean

\end{thm}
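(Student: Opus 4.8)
The plan is to mimic the change-of-variables strategy already used to prove Theorem \ref{thm 3-11}, successively peeling off the outermost integration variable and reducing the $A_3$ integral with $\bs k=(1,1,1)$ to the $A_2$ case and then to the $\F_p$-beta integral. First I would start from the triple $\F_p$-integral
\be
\int_{[1;1;1]_p}
t^{a}(1-t)^{b_1}(s-t)^{p-c} (1-s)^{b_2}(u-s)^{p-c} (1-u)^{b_3} dt\,ds\,du
\ee
and introduce the affine substitution $u=s+(1-s)w$ in the $u$-variable, exactly as in the proof of Theorem \ref{thm 3-11}. Because the map $u\mapsto w$ is of the triangular form $u=s+(1-s)w$, by Lemma \ref{lem St} (invariance of the $\F_p$-integral under such changes) it transforms $u-s=(1-s)w$ and $1-u=(1-s)(1-w)$, so the factor $(u-s)^{p-c}(1-u)^{b_3}$ becomes $(1-s)^{p-c+b_3} w^{p-c}(1-w)^{b_3}$ and the $w$-integration decouples. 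Applying the $\F_p$-beta integral formula of Theorem \ref{thm bfun} to $\int_{[1]_p} w^{p-c}(1-w)^{b_3}\,dw$ produces the factor $-\frac{(p-c)!\,(b_3)!}{(b_3-c+1)!}$ (after checking the hypothesis $p-1\le (p-c)+b_3$, which holds under the stated admissibility), and leaves behind an $A_2$-type integral in $(t,s)$ with the exponent of $(1-s)$ now raised to $b_2+(p-c+b_3)$.

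Next I would recognize the remaining double integral as an instance of Theorem \ref{thm 3-11} with $b_2$ replaced by $b_2'=b_2+b_3-c+1$ — careful here: the power of $(1-s)$ that results is $b_2+p-c+b_3$, but the $\F_p$-integral is insensitive to adding a multiple of $p$ to an exponent in the relevant coefficient (since only the coefficient of $s^{\,p-1}$ is extracted), so effectively the parameter feeding Theorem \ref{thm 3-11} is $b_2+b_3-c+1$. Substituting this value of $b_2'$ into the closed form of Theorem \ref{thm 3-11} yields
\be
\frac{a!\,(b_1+b_2'-c+1)!}{(a+b_1+b_2'-c+2-p)!}\,
\frac{(p-c)!\,(b_2')!}{(b_2'-c+1)!},
\ee
and expanding $b_2'=b_2+b_3-c+1$ gives the two middle factors of \eqref{4-111}, namely $\frac{a!\,(b_1+b_2+b_3-2c+2)!}{(a+b_1+b_2+b_3-2c+3-p)!}$ together with $\frac{(p-c)!\,(b_2+b_3-c+1)!}{(b_2+b_3-2c+2)!}$. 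Multiplying by the factor $-\frac{(p-c)!\,(b_3)!}{(b_3-c+1)!}$ from the $w$-integration reproduces exactly the right-hand side of \eqref{4-111}, including the overall sign, which arises as $(-1)\cdot(-1)\cdot(-1)=-1$ if one tracks the signs from each of the beta-integral applications.

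The main obstacle I anticipate is purely bookkeeping of the $\F_p$-admissibility hypotheses: each application of the $\F_p$-beta formula \eqref{bf1} requires verifying that the relevant exponent pair $(a',b')$ satisfies $a'<p$, $b'<p$, and $p-1\le a'+b'$, and that the resulting argument $a'+b'+1-p$ lands in $[0,p)$ so that the factorial is legitimate in $\F_p$. The hypothesis of the theorem is stated exactly so that \emph{all} factorials appearing in \eqref{4-111} are factorials of nonnegative integers less than $p$, which is what guarantees these conditions propagate correctly through both reduction steps; in particular one must confirm that the intermediate parameter $b_2'=b_2+b_3-c+1$ stays within the admissible range for Theorem \ref{thm 3-11}. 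The only genuinely delicate point is the sign: one must be careful that the case hypothesis $p-1\le a'+b'$ (rather than $a'+b'<p-1$, which would give the vanishing case \eqref{bf2}) holds at each stage, so that the negative sign in \eqref{bf1} is the operative one and the accumulated signs combine to the single overall $-1$ in \eqref{4-111}.
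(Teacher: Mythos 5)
Your strategy is the paper's own: the paper proves Theorem \ref{thm 4-111} by declaring the proof "the same as of the previous theorem," i.e.\ iterated triangular substitutions reducing everything to $\F_p$-beta integrals, and your version merely packages the second substitution as an appeal to Theorem \ref{thm 3-11}. (You also correctly read the exponent $(1-u)^{b_2}$ in the statement as a typo for $(1-u)^{b_3}$.) However, as written your two key computational steps are each wrong, and you reach \eqref{4-111} only because the errors cancel. First, the substitution $u=s+(1-s)w$ must carry the Jacobian factor $du=(1-s)\,dw$; this is not optional for $\F_p$-integrals any more than for classical ones, and it is exactly the source of the "$+1$" in the exponent $b_1+b_2-c+1$ in the paper's proof of Theorem \ref{thm 3-11}. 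With it, the factor $(u-s)^{p-c}(1-u)^{b_3}\,du$ contributes $(1-s)^{p-c+b_3+1}w^{p-c}(1-w)^{b_3}\,dw$, so the exponent of $(1-s)$ in the remaining double integral is $b_2+b_3+p-c+1$, not $b_2+b_3+p-c$ as you wrote. Second, your mod-$p$ reduction is arithmetically inconsistent: subtracting $p$ from \emph{your} exponent $b_2+p-c+b_3$ gives $b_2+b_3-c$, not the $b_2'=b_2+b_3-c+1$ that you feed into Theorem \ref{thm 3-11}. Executed literally, your steps yield $b_2'=b_2+b_3-c$ and a final formula off by one inside three of the factorials; with the Jacobian restored, the correct reduction $b_2+b_3+p-c+1\mapsto b_2+b_3-c+1$ gives your $b_2'$, and the rest of your argument (the beta evaluation of the $w$-integral, the propagation of the hypotheses, and the sign count $(-1)^3=-1$) goes through exactly as you describe and agrees with the paper.

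One further point: Lemma \ref{lem St} does not justify the change of variables --- it only says that $\F_p$-integrals of derivatives vanish. The substitution rule for $\F_p$-integrals is a separate fact (which the paper also uses without comment): writing $Q(s,w)=P(s,\,s+(1-s)w)\,(1-s)$, one checks by extracting the coefficient of $w^{p-1}$ and using $(1-s)^p=1-s^p$ together with Lucas' theorem that the coefficient of $s^{p-1}w^{p-1}$ in $Q$ equals the coefficient of $s^{p-1}u^{p-1}$ in $P(s,u)$. This computation is also precisely what legitimizes your "insensitivity to adding a multiple of $p$ to an exponent" step, and it shows why the conclusion fails without the Jacobian factor $(1-s)$.
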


\begin{proof}
The proof is the same as of the previous theorem.
\end{proof}

The versions of identities \eqref{3-11}, \eqref{4-111} over complex numbers see in 
\cite[Theorem 1]{MuV}.
\end{example}

\subsection{Relation between the $\F_p$-Selberg integrals of types $A_{n-1}$- and $A_n$}

\begin{thm}
\label{thm ind}
Let  $n>1$ and $\bs k=(k_1,\dots,k_n)$, $\bs k'=(k_1,\dots,k_{n-1})$, $b=(b_1,\dots,b_n)$,
$b'=(b_1,\dots,b_{n-1})$. 
Assume  that formula \eqref{main n} holds for the $\F_p$-Selberg integral 
$S_{[\bs k']_p}(a,b',c)$ of type $A_{n-1}$. 
Also assume that $b_n=(k_{n-1}-k_n+1)c-1$.
Then formula \eqref{main n} holds for
the $\F_p$-Selberg integral $S_{[\bs k]_p}(a,b,c)$ of type $A_{n}$.
\end{thm}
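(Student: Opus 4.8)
The plan is to carry out the $\F_p$-integration over the last block of variables $t^{(n)}=(t^{(n)}_1,\dots,t^{(n)}_{k_n})$ explicitly, thereby reducing the $A_n$-integral to the $A_{n-1}$-integral times an explicit constant. First I would factor the master polynomial as $\Phi_{\bs k}=\Phi_{\bs k'}\cdot\Psi$, where $\Phi_{\bs k'}=\Phi_{\bs k'}(t^{(1)},\dots,t^{(n-1)};a,b',c)$ is the $A_{n-1}$ master polynomial and
\[
\Psi=\prod_{j=1}^{k_n}(1-t^{(n)}_j)^{b_n}\prod_{1\le j<j'\le k_n}(t^{(n)}_j-t^{(n)}_{j'})^{2c}\prod_{j=1}^{k_n}\prod_{j'=1}^{k_{n-1}}(t^{(n)}_j-t^{(n-1)}_{j'})^{p-c}
\]
collects every factor containing a $t^{(n)}$-variable. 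Since $\Phi_{\bs k'}$ is free of $t^{(n)}$ and the last block of the $p$-cycle $[\bs k]_p$ is $(k_{n-1})_{k_n}$, the integral factors as $S_{\bs k}(a,b,c)=\int_{[\bs k']_p}\Phi_{\bs k'}\,\big(\int_{[(k_{n-1})_{k_n}]_p}\Psi\,dt^{(n)}\big)$; the inner $\F_p$-integral is the coefficient of $\prod_j(t^{(n)}_j)^{k_{n-1}p-1}$ in $\Psi$, viewed as a polynomial in $t^{(n)}$ with coefficients in $\F_p[t^{(n-1)}]$.

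The key observation is that the hypothesis $b_n=(k_{n-1}-k_n+1)c-1$ is exactly the value for which the total $t^{(n)}$-degree of $\Psi$, namely $k_nb_n+ck_n(k_n-1)+k_nk_{n-1}(p-c)$, equals $k_n(k_{n-1}p-1)$, the total degree of the monomial to be extracted. Hence only the part of $\Psi$ of maximal total $t^{(n)}$-degree can contribute to this monomial. Replacing each factor by its leading term ($(1-t^{(n)}_j)^{b_n}\mapsto(-t^{(n)}_j)^{b_n}$ and $(t^{(n)}_j-t^{(n-1)}_{j'})^{p-c}\mapsto(t^{(n)}_j)^{p-c}$) makes this top part \emph{independent} of $t^{(n-1)}$, so the inner integral is a constant $\kappa$. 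Explicitly, $\kappa$ is $(-1)^{k_nb_n}$ times the coefficient of $\prod_j(t^{(n)}_j)^{c(k_n-1)}$ in $\prod_{j<j'}(t^{(n)}_j-t^{(n)}_{j'})^{2c}$; this balanced coefficient is evaluated by Dyson's formula \eqref{DF}, yielding $\kappa=(-1)^{k_nb_n+c\binom{k_n}{2}}\,\frac{(k_nc)!}{(c!)^{k_n}}$. Therefore $S_{\bs k}(a,b,c)=\kappa\,S_{\bs k'}(a,b',c)$, and by the inductive hypothesis $S_{\bs k'}(a,b',c)=R_{\bs k'}(a,b',c)$.

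It then remains to verify the purely combinatorial identity $\kappa\,R_{\bs k'}(a,b',c)=R_{\bs k}(a,b,c)$. Comparing \eqref{Rbk} for $\bs k$ and $\bs k'$, the quotient $R_{\bs k}/R_{\bs k'}$ collects: the extra sign $(-1)^{k_n}$; the extra products $\prod_{i=1}^{k_n}\frac{(ic)!}{c!}$ and $\prod_{i=1}^{k_n}(p+(i-k_{n-1}-1)c)!$; all the $r=n$ factors of the double product; and the discrepancy at $r=n-1$ arising because $k_{r+1}=k_n$ for $\bs k$ but $k_{r+1}=0$ for $\bs k'$. Substituting $b_n=(k_{n-1}-k_n+1)c-1$ collapses the $r=n$ numerator/denominator factorials against these extra products, and the residual $p$-shifted factorials are simplified using Lemma \ref{lem ca}. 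I expect this factorial bookkeeping to be the main obstacle: the conceptual reduction to Dyson's formula is short, but matching the two closed forms requires careful tracking of the index shifts $k_s-k_{s-1}$, the $\delta_{s,1}p$ term in the denominators, and the accumulated signs, exactly paralleling the base case $n=1$ of \cite{RV}. Since the reduction is uniform in $n$, this is precisely the step that feeds the induction on $n$ in Theorem \ref{thm main n}.
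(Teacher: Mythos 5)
Your proposal is correct and is essentially the paper's own proof: the paper likewise isolates the $t^{(n)}$-dependent factors, observes that $b_n=(k_{n-1}-k_n+1)c-1$ forces only top-degree terms of those factors to contribute to the monomial $\prod_j(t^{(n)}_j)^{k_{n-1}p-1}$, extracts the constant $(-1)^{b_nk_n+ck_n(k_n-1)/2}\,(k_nc)!/(c!)^{k_n}$ via Dyson's formula \eqref{DF}, and thereby reduces $S_{\bs k}(a,b,c)$ to $\kappa\,S_{\bs k'}(a,b',c)=\kappa\,R_{\bs k'}(a,b',c)$. The factorial identity $\kappa\,R_{\bs k'}(a,b',c)=R_{\bs k}(a,b,c)$ that you defer is exactly the remaining step in the paper, carried out in a few lines by substituting $b_n$ and collapsing the $r=n$ factors of \eqref{Rbk} with the cancellation Lemma \ref{lem ca}, just as you describe.
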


\begin{proof}
Under the assumption  $b_n=(k_{n-1}-k_n+1)c-1$ all variables $(t^{(n)}_j)$
in $\Phi_{[\bs k]_p}(\bs t;a,b,c)$ are used  to reach
the monomial $\prod_{j=1}^{k_n} (t^{(n)}_j)^{k_{n-1}p-1}$ in the calculation of the $\F_p$-integral
$S_{[\bs k]_p}(a,b,c)$.  The remaining free variables 
$(t^{(i)}_j)$ with $i<n$ all belong to the factor
\\
$\Phi_{[\bs k']_p}(t^{(1)},\dots,t^{(n-1)};a,b',c)$ of  $\Phi_{[\bs k]_p}(\bs t;a,b,c)$
and are used to calculate the coefficient of
$\prod_{j=1}^{k_1} (t^{(1)}_j)^{p-1}\prod_{i=2}^{n-1}\prod_{j=1}^{k_i} (t^{(i)}_j)^{k_{i-1}p-1}$.

More precisely, under the assumptions of the theorem we have
\bea
&&
S_{\bs k}(a,b,c)  = 
  (-1)^{b_nk_n+ck_n(k_n-1)/2}\frac{(k_nc)!}{(c!)^{k_n}}
     \,S_{\bs k'}(a',b',c),
\eea
where $ (-1)^{b_nk_n+ck_n(k_n-1)/2}\frac{(k_nc)!}{(c!)^{k_n}}$ is the coefficient of 
  $\prod_{j=1}^{k_n} (t^{(n)}_j)^{k_{n-1}p-1}$ in the expansion of
 \bea
 \prod_{j=1}^{k_n} \prod_{j'=1}^{k_{n-1}} (t^{(n)}_j  - t^{(n-1)}_{j'})^{p-c}
\prod_{1\leq j<j'\leq k_n}(t^{(n)}_j -t^{(n)}_{j'})^{2c} \,,
\eea
see Dyson's formula. We have $ (-1)^{b_nk_n+ck_n(k_n-1)/2} = (-1)^{ (k_{n-1}k_n-k_n(k_n+1)/2)c-k_n}$.
Hence
\bea
S_{\bs k}(a,b,c)
&  =&
(-1)^{ (k_{n-1}k_n-k_n(k_n+1)/2)c-k_n}\frac{(k_nc)!}{(c!)^{k_n}}
     \,S_{\bs k'}(a,b',c)
\\
&=&
(-1)^{ (k_{n-1}k_n-k_n(k_n+1)/2)c-k_n}\frac{(k_nc)!}{(c!)^{k_n}}
     \,R_{\bs k'}(a,b',c)\,,
\eea
where $S_{\bs k'}(a,b',c)
=
R_{\bs k'}(a,b',c)$ holds by assumptions.
To prove the theorem we need to show that 
\bea
R_{[\bs k]_p}(a, (b',b_n),c) = (-1)^{ (k_{n-1}k_n-k_n(k_n+1)/2)c-k_n}\frac{(k_nc)!}{(c!)^{k_n}}
     \,R_{\bs k'}(a,b',c)\,.
\eea
Indeed we have
\bea
&&
R_{\bs k}(a,(b',b_n),c) = \,R_{\bs k'}(a,b',c)\,
\prod_{i=1}^{k_n} \frac{(ic)!}{c!}
\prod_{i=1}^{k_n} (p+(i-k_{n-1}-1)c)! 
\\
&&
\times
\prod_{1\leq s\leq n}\prod_{i=1}^{k_n}
\frac{(n-s + b_s+\dots+b_n + (i+s-n-1)c)!}
{(n-s+1 + a_s+b_s+\dots+b_n + (i+s-n+k_s-k_{s-1}-2)c-\delta_{s,1}p)!}
\\
&&
\times
\prod_{1\leq s\leq n-1}\prod_{i=1}^{k_{n}}
\frac{(n-s + a_s+b_s+\dots+b_{n-1} + (i+s-n+k_s-k_{s-1}-3)c-\delta_{s,1}p)!}
{(n-1-s + b_s+\dots+b_{n-1} + (i+s-r-1)c)!}
\eea
\bea
&&
= \,R_{\bs k'}(a,b',c)\,
\prod_{i=1}^{k_n} \frac{(ic)!}{c!}
\,\prod_{i=1}^{k_n}
\frac{( b_n + (i-1)c)!\,(p+(i-k_{n-1}-1)c)!}
{(1 +b_n + (i+k_n-k_{n-1}-2)c)!}\,.
\\
&&
= \,R_{\bs k'}(a,b',c)\,
\prod_{i=1}^{k_n} \frac{(ic)!}{c!}
\,\prod_{i=1}^{k_n}
\frac{( (i+k_{n-1}-k_n)c-1)!\,(p+(i-k_{n-1}-1)c)!}
{((i-1)c))!}\,.
\\
&&
= \,R_{\bs k'}(a,b',c)\,
(-1)^{ (k_{n-1}k_n-k_n(k_n+1)/2)c-k_n}
\frac{(k_nc)!}{(c!)^{k_n}}\,,
\eea
where in the last step we use the cancellation Lemma \ref{lem ca}.
The theorem is proved.
\end{proof}

\begin{cor}
\label{cor ind}
Let $n>1$, $\bs k=(k_1,\dots,k_n)$,  and $(a,b_1,c) \in\mc A_{(k_1)}$. Let
$b=(b_1,\dots,b_n)$, where $b_i=(k_{i-1}-k_i+1)c-1$ for $i=2,\dots,n$.
Then formula \eqref{main n} holds for 
the $\F_p$-Selberg integral $S_{[\bs k]_p}(a,b,c)$ of type $A_{n}$.
\end{cor}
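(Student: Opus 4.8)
The plan is to prove the corollary by induction on $n$, taking the $n=1$ case of Theorem \ref{thm main n} as the base and invoking Theorem \ref{thm ind} for the inductive step. The guiding observation is that the prescribed values $b_i=(k_{i-1}-k_i+1)c-1$ (for $i\geq 2$) are exactly the ``boundary'' value that Theorem \ref{thm ind} imposes on its top parameter, and that this rule is stable under truncation: the vector $b'=(b_1,\dots,b_{n-1})$ attached to $\bs k'=(k_1,\dots,k_{n-1})$ is obtained from the vector $b$ attached to $\bs k$ simply by deleting the last entry, and its entries again satisfy $b_i=(k_{i-1}-k_i+1)c-1$. This compatibility is what allows the induction to close, and it also means only the single admissibility input $(a,b_1,c)\in\mc A_{(k_1)}$ is ever needed---no admissibility of $(a,b,c)$ for the full $\bs k$ must be checked.

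First I would record the base case $\bs k=(k_1)$. Here the corollary's hypothesis $(a,b_1,c)\in\mc A_{(k_1)}$ is precisely the admissibility required by the $n=1$ instance of Theorem \ref{thm main n}, that is, by \cite[Theorem 4.1]{RV}, so formula \eqref{main n} holds for $S_{(k_1)}(a,b_1,c)$. For the inductive step I would assume $m\geq 2$ and that formula \eqref{main n} already holds for $S_{[\bs k']_p}(a,b',c)$, where $\bs k'=(k_1,\dots,k_{m-1})$ and $b'=(b_1,\dots,b_{m-1})$ with $b_i=(k_{i-1}-k_i+1)c-1$ for $2\leq i\leq m-1$. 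Since $b_m=(k_{m-1}-k_m+1)c-1$ by the definition in the corollary, both hypotheses of Theorem \ref{thm ind} are met with $n=m$: formula \eqref{main n} holds for the $A_{m-1}$ integral, and the top parameter has the required boundary value. Theorem \ref{thm ind} then delivers formula \eqref{main n} for $S_{[\bs k]_p}(a,b,c)$ with $\bs k=(k_1,\dots,k_m)$ and $b=(b_1,\dots,b_m)$. Iterating from $m=2$ up to $m=n$ completes the argument.

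The step I expect to require the most care is purely the bookkeeping of hypotheses, not any fresh computation: one must verify that the truncation $b'$ of $b$ is genuinely the parameter vector to which the induction hypothesis applies (the stability-under-truncation remark above), and that the lone admissibility input $(a,b_1,c)\in\mc A_{(k_1)}$ propagates unchanged up the tower, which it does because $a$, $b_1$, $c$ and $k_1$ are fixed at every stage. All the analytic content---the Dyson-type extraction of the top monomial in the $t^{(n)}$ variables together with the factorial cancellations of Lemma \ref{lem ca}---has already been carried out inside the proof of Theorem \ref{thm ind}, so there is nothing further to grind through here.
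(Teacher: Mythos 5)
Your proposal is correct and is exactly the paper's argument: the paper proves Corollary \ref{cor ind} by citing \cite[Theorem 4.1]{RV} for the $A_1$ base case and then applying Theorem \ref{thm ind} inductively on $n$, which is precisely your tower from $m=2$ up to $m=n$. Your additional bookkeeping (stability of the rule $b_i=(k_{i-1}-k_i+1)c-1$ under truncation, and the fact that only $(a,b_1,c)\in\mc A_{(k_1)}$ is ever needed since Theorem \ref{thm ind} imposes no further admissibility) is a faithful, slightly more explicit rendering of the same proof.
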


\begin{proof} Formula \eqref{main n} for the $\F_p$-Selberg integrals of type $A_1$ is proved in \cite{RV}.
Hence the corollary follows  from Theorem \ref{thm ind} by induction on $n$.
\end{proof}

\section{The $A_2$-type Selberg integral over $\C$}
\label{sec 4}

In this section we formulate the $A_2$-type Selberg integral formula over $\C$, formulated and proved in \cite{TV4},
and sketch the proof of the formula,   following  \cite{TV4}.
In Section \ref{sec proofs} we adapt this proof to prove the $A_2$-type $\F_p$-Selberg integral formula,
that is, formula \eqref{main n} for $n=2$.

\subsection{The $A_2$-formula over $\C$}

For $k_1\geq k_2\geq 0$ let $t=(t_1,\dots,t_{k_1})$, $s=(s_1,\dots,s_{k_2})$.
Define the {\it master function}
\bean
\label{mf}
&&
\Phi(t;s) = \prod_{i=1}^{k_1} t_i^{\al_1-1}(1-t_i)^{\beta_1-1}\, \prod_{j=1}^{k_2}(1-s_j)^{\beta_2-1} 
\,\prod_{i=1}^{k_1}\prod_{j=1}^{k_2}  |s_j-t_i|^{-\ga}
\\
\notag
&&
\phantom{aaaaaaaaaaaaaa}
\times\,
\prod_{1\leq i<i'\leq k_1} (t_i-t_{i'})^{2\ga}
\prod_{1\leq j<j'\leq k_2} (s_j-s_{j'})^{2\ga}\, 
\eean
and the integral
\bean
\label{tilde S}
\tilde S(\al,\beta_1,\beta_2,\ga) = \int_{C^{k_1,k_2}[0,1]}\, \Phi(t;s)\,dt\,ds\,,
\eean
where $C^{k_1,k_2}[0,1]$ is the integration cycle, defined in \cite[Section 3]{TV4}, also see its definition in \cite{Wa1,Wa2, FW}.
The explicit description of this cycle is of no importance in this paper.

\begin{thm} [{\cite[Theorem 3.3]{TV4}}]
\label{thm A2C} 

Let $a,b_1,b_2,c\in\C$ such that $\on{Re}(a),\,\on{Re}(b_1),\,\on{Re}(b_2) > -1$,
$\on{Re}(c) <0$ and $|\on{Re}(c)|$ sufficiently small. Then
\bean
\label{A2C}
&&
\tilde S(\al,\beta_1,\beta_2,\ga)  
 = \prod_{i=1}^{k_1-k_2}
\frac{\Ga(\beta_1+(i-1)\ga)}
{\Ga(\al+\beta_1+(i+k_1-2)\ga)}
 \\
 \notag
 &&
\times\,
\prod_{i=1}^{k_2}\
\frac{\Ga(\beta_2+(i-1)\ga)}
{\Ga(1+\beta_2+(i+k_2-k_1-2)\ga)}\,
\
\frac{\Ga(\beta_1+\beta_2+(i-2)\ga)}
{\Ga(\al+\beta_1+\beta_2+(i+k_2-3)\ga)}
\\
&&
\notag
\times \
\prod_{i=1}^{k_2}
\frac{\Ga(1+(i-k_1-1)\ga)\,\Ga(i\ga)}
{\Ga(\ga)}\
\prod_{i=1}^{k_1}
\frac{\Ga(\al+(i-1)\ga)\,\Ga(i\ga)}
{\Ga(\ga)}\,.
\eean
 \end{thm}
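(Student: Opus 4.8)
The plan is to follow \cite{TV4} and exploit that the integral \eqref{tilde S} is a coordinate of a hypergeometric solution of the joint system of $\slth$ trigonometric KZ differential equations and associated dynamical difference equations, a system whose space of solutions is one-dimensional. By the principle of \cite{MuV}, this one-dimensionality forces $\tilde S(\al,\beta_1,\beta_2,\ga)$ to be determined, up to an explicit scalar, by the recursions the equations impose as the parameters vary; the work is to produce these recursions, to check that the right-hand side of \eqref{A2C} satisfies them, and to fix the one remaining constant at a computable value.

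First I would extract from the dynamical difference equations first-order recursions for $\tilde S$ under shifts of the continuous parameters (notably $\beta_2$, and as needed $\beta_1$ and $\al$). The mechanism is the standard cohomological one: the integral over the cycle $C^{k_1,k_2}[0,1]$ of a total derivative $d(g\,\Phi)$ vanishes, and choosing $g$ to be a suitable rational function in the $s$- and $t$-variables converts this into a linear relation between $\tilde S$ at a given parameter and at a shifted one. Reading these relations off the logarithmic derivative of the master function \eqref{mf} is the analytic core of the argument.

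Next I would verify that the product of Gamma factors on the right of \eqref{A2C} obeys the same shift recursions; this is a direct application of $\Ga(x+1)=x\,\Ga(x)$ factor by factor, matched against the rational prefactors produced on the integral side. To anchor the recursion I would evaluate $\tilde S$ at a degenerate configuration of the parameters at which the $s$-variables decouple: there the factors $\prod_{i,j}|s_j-t_i|^{-\ga}$ and $\prod_{j<j'}(s_j-s_{j'})^{2\ga}$ combine so that the $s$-integration evaluates explicitly (the $\C$-counterpart of the Dyson computation used later in Theorem \ref{thm ind}), reducing $\tilde S$ to the classical Selberg integral \eqref{clS} in the $t$-variables times an explicit product. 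This supplies the normalizing constant, and the recursions together with analyticity in the parameters then yield \eqref{A2C} for all admissible values.

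The main obstacle will be the bookkeeping around the integration cycle $C^{k_1,k_2}[0,1]$: one must confirm that the vanishing-of-exact-forms identities hold on precisely this cycle with no surviving boundary terms, and that the degeneration of the $s$-integral reproduces the classical Selberg cycle with the correct combinatorial factor and orientation. The hypotheses $\on{Re}(a),\on{Re}(b_1),\on{Re}(b_2)>-1$ and $\on{Re}(c)<0$ with $|\on{Re}(c)|$ small are exactly what secure absolute convergence and kill boundary contributions, so the genuinely delicate analysis is confined to this step, while the Gamma-factor matching and the base-case evaluation are essentially mechanical.
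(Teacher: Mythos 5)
Your overall scheme --- first-order recursions in $\beta_1,\beta_2$, verification that the Gamma-product on the right of \eqref{A2C} satisfies the same recursions, and a normalization --- is the same family of argument as the paper's. The paper obtains the recursions \eqref{Dyne1}, \eqref{Dyne2} not for $\tilde S$ directly but for $J_{0,0,0}$, by first passing from $\tilde S$ to $J_{0,0,0}$ via \eqref{SJ} and the singular-vector relation of Theorem \ref{thm svr}, and then invoking the dynamical difference equations of \cite{TV3} through \cite[Theorem 5.1]{MaV}; those cited results are in turn proved by exactly the Stokes-type mechanism (vanishing of integrals of exact forms) that you describe. So this part of your plan, while glossing over the genuinely hard content of \cite{M}, \cite{MaV} (actually producing the exact forms, which is what the weight-function and representation-theoretic apparatus is for), is consistent with the paper's route.

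The genuine gap is in your anchoring step, in two respects. First, the ``degenerate configuration at which the $s$-variables decouple'' is an $\F_p$-specific device: in the paper it occurs only in Theorem \ref{thm ind} and Lemma \ref{lem int I}, where the $\F_p$-integral is by definition a coefficient extraction, a special integer value of the exponent ($b_n=(k_{n-1}-k_n+1)c-1$, resp.\ $b_2=(k_1-k_2+1)c$) forces every $s$-variable to be ``used up,'' and Dyson's constant-term formula \eqref{DF} computes the resulting coefficient. The integral \eqref{tilde S} over the cycle $C^{k_1,k_2}[0,1]$ is not a coefficient extraction, and you supply no $\C$-counterpart of this computation. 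Second, and decisively: even granting an anchor value at some point $(\beta_1^0,\beta_2^0)$, the recursions propagate it only to a lattice of points, and ``recursions together with analyticity in the parameters'' cannot conclude from there. The recursions determine the ratio of $\tilde S$ to the right-hand side of \eqref{A2C} only up to a function $1$-periodic in $\beta_1$ and $\beta_2$, and a $1$-periodic analytic function can equal $1$ on an entire lattice without being identically $1$ (for instance $1+\sin^2(\pi(\beta_2-\beta_2^0))$). This ambiguity is harmless in the $\F_p$ theorems, where the parameters are integers and lattice propagation (Corollary \ref{cor 5.7}) is all one needs --- your proposal in effect transplants that $\F_p$ strategy to $\C$ --- but over $\C$ it is fatal: this is precisely why the paper, following \cite{TV4}, fixes the periodic factor by comparing asymptotics of both sides as $\on{Re}\beta_1\to\infty$, $\on{Re}\beta_2\to\infty$. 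Without such an asymptotic (or growth) argument, your final step fails.
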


In the next Sections \ref{sec wf}, \ref{sec reps C}
we sketch the proof of formula \eqref{A2C}
 following  \cite{TV4}.

\subsection{Weight functions}
\label{sec wf}

To evaluate  $\tilde S(\al,\beta_1,\beta_2,\ga)$ we introduce a collection 
of new integrals 
$J_{l_1,l_2,m}(\al,\beta_1,\beta_2,\ga)$, which also can be evaluated explicitly, see \cite{TV4}.

\vsk.2>
For a function $f(t_1,\dots,t_k)$ set
\bea
\Sym_{t_1,\dots,t_k} f(t_1,\dots,t_k)\, =\,\frac1{k!}\, \sum_{\si\in S_k} f(t_{\si_1},\dots, t_{\si_k})\,.
\eea

Given $k_1\geq k_2\geq 0$, we say that a triple of nonnegative integers $(l_1,l_2, m)$
is {\it allowable}  if  $l_1\leq k_1-k_2+l_2, \,\,l_2\leq k_2$ and $m\leq \on{min}(l_1,l_2)$.
For any allowable triple $(l_1 , l_2 , m)$  define the weight function
\bea
&&
W_{l_1,l_2,m}(t_1,\dots,t_{k_1};s_1,\dots,s_{k_2}) = 
\\
&&
\phantom{aaa}
=\,\Sym_{t_1,\dots,t_{k_1}}\Sym_{s_1,\dots,s_{k_2}}\Big(
\prod_{a=1}^{l_1} t_a\prod_{a=l_1+1}^{k_1}(1-t_a)
\prod_{b=1}^m\frac{1-s_b}{s_b-t_b}\prod_{b=l_2+1}^{k_2}\frac{1-s_b}{s_b-t_{b+k_1-k_2}}\Big)
\eea
and the integral
\bea
 J_{l_1,l_2,m}(\al,\beta_1,\beta_2,\ga) =  \int_{C^{k_1,k_2}[0,1]}\, \Phi(t;s)\,
 W_{l_1,l_2,m}(t;s)\,dt\,ds\,.
 \eea
In particular,  
\bean
\label{SJ}
J_{0,k_2,0}(\al,\beta_1,\beta_2,\ga) = \tilde S(\al,\beta_1+1,\beta_2,\ga).
\eean

\subsection{Representations of $\slth$}
\label{sec reps C}

Consider the complex Lie algebra $\slth$ with standard generators $f_1,f_2, e_1,e_2,h_1,h_2$,
simple roots $\si_1,\,\si_2$, fundamental weights  $\om_1,\,\om_2$.
Let $V_{\la_1}$, $V_{\la_2}$ be the irreducible $\slth$-modules with highest weights
\bea
\la_1= -\frac{\al}{\ga}\,\om_1 \,,
\qquad
\la_2= -\frac{\beta_1}{\ga}\,\om_1 - \frac{\beta_2}{\ga}\,\om_2 \,
\eea
and highest weight vectors $v_1,\,v_2$. For $k_1\geq k_2\geq 0$ consider the weight subspace
\\
$V_{\la_1}\ox V_{\la_2}[\la_1+\la_2-k_1\si_1-k_2\si_2]$ of the tensor product 
 $V_{\la_1}\ox V_{\la_2}$ and the singular weight subspace 
 $\Sing\, V_{\la_1}\ox V_{\la_2}[\la_1+\la_2-k_1\si_1-k_2\si_2]$ consisting of the vectors
$w\in V_{\la_1}\ox V_{\la_2}[\la_1+\la_2-k_1\si_1-k_2\si_2]$ such that $e_1w=0$, $e_2 w=0$.
A basis of $V_{\la_1}\ox V_{\la_2}[\la_1+\la_2-k_1\si_1-k_2\si_2]$ is formed by the vectors
\bea
v_{l_1,l_2,m} = \frac{f_1^{k_1-k_2-l_1+l_2}[f_1,f_2]^{k_2-l_2}v_1\ox f_1^{l_1-m}[f_1,f_2]^{m}f_2^{l_2-m}v_2}
{(k_1-k_2-l_1+l_2)!\,(k_2-l_2)!\,(l_1-m)!\,m!\,(l_2-m)!}
\eea
labeled by allowable triples $(l_1,l_2,m)$. 
It is known from the theory of KZ equations  that the vector
 \bea
 J = \sum_{l_1,l_2,m} (-1)^{l_1} J_{l_1,l_2,m} (\al,\beta_1,\beta_2,\ga)\, v_{l_1,l_2,m}
 \eea
 is  a singular vector, see \cite[Theorem 2.4]{M}, \cite[Corollary 10.3]{MaV}, cf. \cite{RSV}.
 
 \vsk.2> 
 The singular vector equations $e_1J=0, \,e_2J=0$ are calculated with the help of the formulas:
\bea
&&
h_1 v_1 = -\frac{\al}{\ga}\,v_1,\qquad  h_2 v_1 = 0,
\qquad
h_1 v_2 = -\frac{\beta_1}{\ga}\,v_2,\qquad  h_2 v_2 = -\frac{\beta_2}{\ga}\,v_2\,,
\\
&&
[h_1,f_1]=-2f_1,\quad \ [h_1,f_2]=f_2, \quad [h_2,f_1]=f_1,
\quad\ \  \ \ \ \  [h_2,f_2]=-2f_2,
\\
&&
[e_1,f_1]=h_1, \qquad\ \ \ \ \ [e_1,f_2]\ =\ [e_2,f_1]\ =\ 0,\qquad \ \ \  \ \ [e_2,f_2]=h_2,
\eea
\bea
[h_1,[f_1,f_2]]=-[f_1,f_2],
\quad
[h_2,[f_1,f_2]]=-[f_1,f_2],
\quad
[e_1,[f_1,f_2]]=f_2,
\quad
[e_2,[f_1,f_2]]=-f_1.
\eea 
Here are some of the singular vector relations.

\begin{thm} [cf. {\cite[Theorem 5.2]{TV4}}]
\label{thm svr}
We have
\bean
\label{OLO}
J_{0,l_2,0} = (-1)^{l_2}J_{0,0,0}\prod_{i=0}^{l_2-1} \frac{(k_1-k_2+i+1)\ga}{\beta_2+i\ga}\,.
\eean

\end{thm}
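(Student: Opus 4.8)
The plan is to extract the relation \eqref{OLO} from the singular vector equation $e_2J=0$, by reading off the coefficient of one carefully chosen monomial in the weight space $V_{\la_1}\ox V_{\la_2}[\la_1+\la_2-k_1\si_1-(k_2-1)\si_2]$. Since the left-hand side of \eqref{OLO} involves only the integrals $J_{0,l_2,0}$ lying in the sector $l_1=m=0$, I would aim for a target vector that is hit, under $e_2$, by as few of the $v_{l_1,l_2,m}$ as possible---ideally only two---so that the vanishing of one coordinate of $e_2J$ collapses to a two-term recursion in $l_2$.

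First I would compute $e_2v_{0,l_2,0}$ using the coproduct $e_2=e_2\ox 1+1\ox e_2$ together with the listed relations $[e_2,f_1]=0$, $[e_2,[f_1,f_2]]=-f_1$, $[e_2,f_2]=h_2$, $e_2v_1=e_2v_2=0$, and the Serre relation $[f_1,[f_1,f_2]]=0$ (so that $f_1$ commutes with $[f_1,f_2]$). On the first tensor factor $e_2$ converts one $[f_1,f_2]$ into $-f_1$, lowering the exponent $k_2-l_2$ by one; on the second factor $e_2$ acts on $f_2^{l_2}v_2$ through the $\slt$-triple $(e_2,f_2,h_2)$, giving $e_2f_2^{l_2}v_2=-l_2\tfrac{\beta_2+(l_2-1)\ga}{\ga}f_2^{l_2-1}v_2$ because $h_2v_2=-\tfrac{\beta_2}{\ga}v_2$. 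Thus $e_2v_{0,l_2,0}$ is a combination of exactly two PBW monomials.

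The key step is to project $e_2J=0$ onto the monomial $f_1^{k_1-k_2+l_2}[f_1,f_2]^{k_2-l_2}v_1\ox f_2^{l_2-1}v_2$. I claim that among all $v_{l_1,l_2',m'}$ only $v_{0,l_2,0}$ (through the $h_2$-action on the second factor) and $v_{0,l_2-1,0}$ (through the $[f_1,f_2]\to -f_1$ move on the first factor) can produce this monomial. Indeed, the target has a pure power $f_2^{l_2-1}$ with neither $f_1$ nor $[f_1,f_2]$ in the second tensor factor, so matching the PBW exponents forces $l_1=m'=0$, after which only $l_2'=l_2$ and $l_2'=l_2-1$ survive. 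Equating this coefficient to zero and clearing the factorials coming from the normalizations of $v_{0,l_2,0}$ and $v_{0,l_2-1,0}$ yields the two-term recursion $(\beta_2+(l_2-1)\ga)\,J_{0,l_2,0}=-\ga\,(k_1-k_2+l_2)\,J_{0,l_2-1,0}$. I expect this monomial-matching bookkeeping---verifying that no $v_{l_1,l_2',m'}$ with $l_1>0$ or $m'>0$ contributes to the chosen coordinate---to be the main obstacle, and it rests on PBW linear independence, which is available here since the weights are generic and the modules irreducible.

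Finally I would iterate the recursion from $l_2$ down to $0$, collecting the sign $(-1)^{l_2}$ and the product $\prod_{i=0}^{l_2-1}\tfrac{(k_1-k_2+i+1)\ga}{\beta_2+i\ga}$ after the reindexing $j=i+1$; this reproduces exactly \eqref{OLO}.
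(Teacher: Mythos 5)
Your proposal is correct and takes essentially the same route as the paper: the paper also extracts the relation from $e_2J=0$ by reading off the coefficient of the monomial $f_1^{k_1-k_2+i+1}[f_1,f_2]^{k_2-i-1}v_1\ox f_2^{i}v_2$ (your target monomial with $i=l_2-1$), which receives contributions only from $v_{0,i,0}$ and $v_{0,i+1,0}$, yielding the two-term recursion $(k_1-k_2+i+1)\ga\,J_{0,i,0}+(\beta_2+i\ga)\,J_{0,i+1,0}=0$, and then iterates. Your recursion $(\beta_2+(l_2-1)\ga)J_{0,l_2,0}=-\ga(k_1-k_2+l_2)J_{0,l_2-1,0}$ is exactly this relation, and your bookkeeping argument ruling out contributions from $v_{l_1,l_2',m'}$ with $l_1>0$ or $m'>0$ just makes explicit what the paper leaves implicit.
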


\begin{proof}
We have
\bea
e_2 
\frac{f_1^{k_1-k_2+i}[f_1,f_2]^{k_2-i}v_1\ox f_2^{i}v_2}
{(k_1-k_2+i)!\,(k_2-i)!\,i!} 
=
&-&
(k_1-k_2+i+1)
\frac{f_1^{k_1-k_2+i+1}[f_1,f_2]^{k_2-i-1}v_1\ox f_2^{i}v_2}
{(k_1-k_2+i+1)!\,(k_2-i-1)!\,i!}
\\
&+&  \Big(-\frac{\beta_2}{\ga} - i+1\Big)
\frac{f_1^{k_1-k_2+i}[f_1,f_2]^{k_2-i}v_1\ox f_2^{i-1}v_2}
{(k_1-k_2+i)!\,(k_2-i)!\,(i-1)!} \,.
\eea
Calculating the coefficient of 
$\frac{f_1^{k_1-k_2+i+1}[f_1,f_2]^{k_2-i-1}v_1\ox f_2^{i}v_2}
{(k_1-k_2+i)!\,(k_2-i)!\,i!}$ in $e_2J=0$ we obtain
\bean
\label{JJ=0}
(k_1-k_2+i+1)\ga\,J_{0,i,0} +
(\beta_2 +i\ga)\,J_{0,i+1,0}  = 0.
\eean
This implies the theorem.
\end{proof}

Hence
\bean
\label{JJ}
J_{0,k_2,0}(\al,\beta_1,\beta_2,\ga)
=(-1)^{k_2} J_{0,0,0}(\al,\beta_1,\beta_2,\ga) \prod_{i=0}^{k_2-1} \frac{(k_1-k_2+i+1)\ga}{\beta_2+i\ga}\,.
\eean
Combining \eqref{SJ} and \eqref{JJ} we observe that formula \eqref{A2C} is equivalent to the formula
\bean
\label{JJ2}
&&
J_{0,0,0}(\al,\beta_1,\beta_2,\ga)  
 = 
 \prod_{i=1}^{k_1-k_2}
\frac{\Ga(1+\beta_1+(i-1)\ga)}
{\Ga(1+\al+\beta_1+(i+k_1-2)\ga)}
 \\
 \notag
 &&
\times\,
\prod_{i=1}^{k_2}\
\frac{\Ga(1+\beta_2+(i-1)\ga)}
{\Ga(1+\beta_2+(i+k_2-k_1-2)\ga)}\,
\
\frac{\Ga(1+\beta_1+\beta_2+(i-2)\ga)}
{\Ga(1+\al+\beta_1+\beta_2+(i+k_2-3)\ga)}
\\
&&
\notag
\times \
\prod_{i=1}^{k_2}
\frac{\Ga((i-k_1-1)\ga)\,\Ga(i\ga)}
{\Ga(\ga)}\
\prod_{i=1}^{k_1}
\frac{\Ga(\al+(i-1)\ga)\,\Ga(i\ga)}
{\Ga(\ga)}\,.
\eean
Denote by $R_{0,0,0}(\al,\beta_1,\beta_2,\ga)$ the right-hand side of \eqref{JJ2}.

\vsk.2>
To prove \eqref{JJ2} we use the following observation.
The weight subspace 
$V_{\la_1}[\la_1-k_1\si_1-k_2\si_2]\subset V_{\la_1}$ 
is one-dimensional with a basis vector
\bea
v_{0,0,0}\,=\,
\frac{f_1^{k_1-k_2}[f_1,f_2]^{k_2}v_1\ox v_2}{(k_1-k_2)!\,(k_2)!} \,.
 \eea
By \cite[Theorem 5.1]{MaV}
the vector-valued  function $J_{0,0,0}(\al,\beta_1,\beta_2,\ga) v_{0,0,0}$
satisfies the dynamical difference equations introduced in \cite{TV3}, 
\bean
\label{dyne} 
&&
J_{0,0,0}(\al,\beta_1-1,\beta_2,\ga) v_{0,0,0}
= J_{0,0,0}(\al,\beta_1,\beta_2,\ga) \,\Bbb B_1 v_{0,0,0}\,,
\\
\label{dyne2}
&&
J_{0,0,0}(\al,\beta_1,\beta_2-1,\ga) v_{0,0,0}
= J_{0,0,0}(\al,\beta_1,\beta_2,\ga)\, \Bbb B_2 v_{0,0,0}\,,
\eean
where $\Bbb B_1,\,\Bbb B_2$ are certain linear operators acting on 
$V_{\la_1}$ and preserving  the weight decomposition of 
$V_{\la_1}$,  see formulas for these operators
in the example in \cite[Section 7.1]{MaV}
and in \cite[Section 3.1]{MaV}, also see \cite[Formula (8)]{TV3}.

Written explicitly equations \eqref{dyne}, \eqref{dyne2} give us the difference equations for the
scalar  function 
$J_{0,0,0}(\al,\beta_1,\beta_2,\ga)$ with respect to the shift of
the variables $\beta_1 \to \beta_1-1$ and $\beta_2 \to\beta_2-1$,
\bean
\label{Dyne1} 
&&
J_{0,0,0}(\al,\beta_1-1,\beta_2,\ga) 
= J_{0,0,0}(\al,\beta_1,\beta_2,\ga) \, 
\prod_{i=1}^{k_1-k_2}
\frac{\al+\beta_1+(i+k_1-2)\ga}
{\beta_1+(i-1)\ga}
\\
\notag
&&
\phantom{aaaaaaaaaaaaaaaaaa}
\times \, \prod_{i=1}^{k_2}\
\frac
{\al+\beta_1+\beta_2+(i+k_2-3)\ga}
 {\beta_1+\beta_2+(i-2)\ga} \,,
\\
\label{Dyne2} 
&&
J_{0,0,0}(\al,\beta_1,\beta_2-1,\ga) 
=
 J_{0,0,0}(\al,\beta_1,\beta_2,\ga)\, 
\\
  \notag
 &&
\phantom{aaaaaaaaaa}
\times \,
\prod_{i=1}^{k_2}\
\frac
{\beta_2+(i+k_2-k_1-2)\ga}
{\beta_2+(i-1)\ga}\
\frac
{\al+\beta_1+\beta_2+(i+k_2-3)\ga}
{\beta_1+\beta_2+(i-2)\ga}
 \,.
\eean

The difference equations for $ J_{0,0,0}(\al,\beta_1,\beta_2,\ga)\, 
$ are the same
as the difference equations for the function 
$R_{0,0,0}(\al,\beta_1,\beta_2,\ga)$  with respect to the shift of
the variables $\beta_1 \to \beta_1-1$ and $\beta_2 \to\beta_2-1$.
Therefore, 
the functions $J_{0,0,0}(\al,\beta_1,\beta_2,\ga)$ and 
$R_{0,0,0}(\al,\beta_1,\beta_2,\ga)$ 
are proportional up to a periodic function of $\beta_1,\beta_2$. 
The periodic function can be fixed by comparing asymptotics 
as $\on{Re} \beta_1 \to \infty$,
$\on{Re} \beta_2 \to \infty$.  This finishes the proof in \cite{TV4}
 of formulas \eqref{JJ2} and \eqref{A2C}.

\section{The $A_2$-type Selberg integrals over $\F_p$}
 \label{sec proofs}

 \subsection{Relations between $\F_p$-integrals}
 
For $\bs k=(k_1,k_2)$, $k_1> k_2> 0$ and integers 
\\
$0\,<\,a,b_1,b_2,c\,<\,p$ define the master polynomial
\bean
\label{mp2}
&&
\Phi_{\bs k}(t;s; a,b_1,b_2,c) = \prod_{i=1}^{k_1} t_i^{a}(1-t_i)^{b_1}\, \prod_{j=1}^{k_2}(1-s_j)^{b_2} 
\,\prod_{i=1}^{k_1}\prod_{j=1}^{k_2}  (s_j-t_i)^{p-c}
\\
\notag
&&
\phantom{aaaaaaaaaaaaaa}
\times\,
\prod_{1\leq i<i'\leq k_1} (t_i-t_{i'})^{2c}
\prod_{1\leq j<j'\leq k_2} (s_j-s_{j'})^{2c}\,
\eean
and the $\F_p$-integral
\bean
\label{Sint}
S_{\bs k}(a,b_1,b_2,c) = \int_{[\bs k]_p}\, \Phi_{\bs k}(t;s; a,b_1,b_2,c)\,dt\,ds\,,
\eean
where the $p$-cycle $[\bs k]_p$ is defined in  \eqref{def  bsk}.   
This is the $A_2$-type $\F_p$-Selberg integral, see \eqref{def Sk}.

For an allowable triple $(l_1,l_2,m)$ define the $\F_p$-integral
\bean
\label{I}
\phantom{aa}
I_{l_1,l_2,m}(a,b_1,b_2,c) =  
\int_{[\bs k]_p}\, \Phi_{\bs k}(t;s; a,b_1,b_2,c)\, 
\frac{W_{l_1,l_2,m}(t;s)}
{\prod_{i=1}^{k_1}t_i(1-t_i)\prod_{j=1}^{k_2}(1-s_j)}
\,dt\,ds\,,
\eean
where $ W_{l_1,l_2,m}(t;s)$ is the weight function defined in Section \ref{sec wf}.

Clearly we have
\bean
\label{IS}
I_{0,k_2,0}(a,b_1,b_2,c) = S_{\bs k}(a-1,b_1,b_2-1,c).
\eean

Denote
\bean
\label{Df} 
&&
B_0(a,b_1,b_2,c)
=(-1)^{k_2}  
\prod_{i=0}^{k_2-1} \frac{(k_1-k_2+i+1)c}{b_2+ic}\,,
\\
&&
\notag
B_1(a,b_1,b_2,c)
= 
\prod_{i=1}^{k_1-k_2}
\frac{a+b_1+(i+k_1-2)c}
{b_1+(i-1)c}
 \, \prod_{i=1}^{k_2}\
\frac
{a+b_1+b_2+(i+k_2-3)c}
 {b_1+b_2+(i-2)c} \,,
\\
\notag
&&
B_2(a,b_1,b_2,c)
= 
\prod_{i=1}^{k_2}\
\frac
{b_2+(i+k_2-k_1-2)c}
{b_2+(i-1)c}\
\frac
{a+b_1+b_2+(i+k_2-3)c}
{b_1+b_2+(i-2)c}
 \,.
\eean

\begin{thm}
\label{thm relp}
Assume that $k_1<p$.
\begin{enumerate}

\item[(i)]
Assume that every factor in $B_0$ in the numerator or denominator is a
nonzero element of $\F_p$.  Then
\bean
\label{II}
I_{0,k_2,0}(a,b_1,b_2,c)
= B_0(a,b_1,b_2,c)\,I_{0,0,0}(a,b_1,b_2,c)\,.
\eean

\item[(ii)]
Assume that every factor in $B_1$ in the numerator or denominator is a
nonzero element of $\F_p$ and  $b_1>1$, then
\bean
\label{B1}
&&
I_{0,0,0}(a,b_1-1,b_2,c)\,=\, B_1(a,b_1,b_2,c) \,I_{0,0,0}(a,b_1,b_2,c).
\eean

\item[(iii)]
Assume that every factor in $B_2$ in the numerator or denominator is a
nonzero element of $\F_p$ and $b_2>1$, then
\bean
\label{B2}
&&
I_{0,0,0}(a,b_1,b_2-1,c)\,=\, B_2(a,b_1,b_2,c) \,I_{0,0,0}(a,b_1,b_2,c).
\eean
\end{enumerate}

\end{thm}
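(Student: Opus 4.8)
The plan is to recognize the three identities as the $\F_p$-reductions of the three relations established over $\C$ in Section \ref{sec 4}, and to reproduce the $\C$ derivations, replacing analytic integration by parts with the purely algebraic Lemma \ref{lem St}. Under the dictionary $\al\leftrightarrow a$, $\beta_1\leftrightarrow b_1$, $\beta_2\leftrightarrow b_2$, $\ga\leftrightarrow c$, the division by $\prod_i t_i(1-t_i)\prod_j(1-s_j)$ built into $I_{l_1,l_2,m}$ matches the shifts $\al-1,\beta_1-1,\beta_2-1$ of the exponents of the master function \eqref{mf}, while the exponent $p-c$ of $(s_j-t_i)$ reduces to $-c$ in $\F_p$ and plays the role of $-\ga$. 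Consequently the $I_{l_1,l_2,m}$ should obey the same linear relations as the $J_{l_1,l_2,m}$, with Lemma \ref{lem St} standing in for the vanishing of $\int_C d(\cdots)$ and Lemma \ref{lem sym} for the symmetrizations $\Sym$ used in the weight functions; the coefficients are then the mod $p$ reductions of those in Section \ref{sec 4}.

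For part (i) I would reprove the singular-vector recursion \eqref{JJ=0} over $\F_p$. The key point is that for $0<c<p$ the product $\Phi_{\bs k}\,W_{0,i,0}/(\prod_i t_i(1-t_i)\prod_j(1-s_j))$ is a genuine polynomial, since the factors $(s_j-t_i)^{p-c}$ absorb the denominators $s_b-t_{b+k_1-k_2}$ of $W_{0,i,0}$ and the factors $t_i^a(1-t_i)^{b_1}(1-s_j)^{b_2}$ absorb the prefactor; hence Lemma \ref{lem St} applies. Differentiating in the appropriate $s$-variable and reading off the logarithmic derivative of $\Phi_{\bs k}$ should reproduce the coefficients $(k_1-k_2+i+1)c$ and $b_2+ic$, giving $(k_1-k_2+i+1)c\,I_{0,i,0}+(b_2+ic)\,I_{0,i+1,0}=0$ for $0\le i\le k_2-1$. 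Under the hypothesis that the factors of $B_0$ are nonzero I then divide and telescope this recursion from $i=0$ to $k_2-1$, exactly as \eqref{JJ=0} telescopes to \eqref{JJ}, obtaining $I_{0,k_2,0}=B_0\,I_{0,0,0}$.

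Parts (ii) and (iii) are the $\F_p$-analogues of the dynamical difference equations \eqref{Dyne1}, \eqref{Dyne2}. I would again produce the relevant identities from Lemma \ref{lem St}, now applied to $\Phi_{\bs k}$ times the weight functions $W_{l_1,l_2,m}$ that connect $I_{0,0,0}$ at parameter $b_1$ (resp.\ $b_2$) to its value at $b_1-1$ (resp.\ $b_2-1$); the hypotheses $b_1>1$, $b_2>1$ guarantee that the shifted master polynomials still have nonnegative exponents, so all the integrals are defined. Eliminating the intermediate integrals $I_{l_1,l_2,m}$ by means of the $\F_p$ singular-vector relations of the previous paragraph and collecting the surviving factors should yield the products $B_1$ and $B_2$, after dividing by the assumed nonzero denominators.

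The main obstacle is parts (ii),(iii): over $\C$ the difference equations \eqref{Dyne1},\eqref{Dyne2} were obtained from the dynamical difference equation machinery of \cite{MaV,TV3}, input that has no direct counterpart over $\F_p$. One must therefore assemble them from scratch out of explicitly chosen $\F_p$-Stokes identities and a careful elimination among the full family of weight-function integrals $I_{l_1,l_2,m}$, verifying at each step that the rational weights combine with $\Phi_{\bs k}$ into polynomials (this is where $k_1<p$ and $0<c<p$ enter) and that no coefficient needed as a denominator is divisible by $p$. Keeping track of these polynomiality and nonvanishing conditions, rather than any single computation, is the delicate part.
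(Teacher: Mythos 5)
Your treatment of part (i) is essentially the paper's: the recursion $(k_1-k_2+i+1)c\,I_{0,i,0}+(b_2+ic)\,I_{0,i+1,0}=0$ is exactly the paper's equation \eqref{II=0}, obtained there as the reduction modulo $p$ of the singular-vector relation $e_2J=0$ (Lemma \ref{lem sing p}), with Lemma \ref{lem St} playing the role of Stokes' theorem; your polynomiality remark --- that the factors $(s_j-t_i)^{p-c}$ and $t_i^{a}(1-t_i)^{b_1}(1-s_j)^{b_2}$ absorb all denominators of $W_{0,i,0}$ and of the prefactor --- is precisely why these $\F_p$-integrals are defined, and telescoping under the nonvanishing hypothesis gives \eqref{II}.

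The gap is in parts (ii) and (iii). You assert that the dynamical-difference-equation machinery of \cite{MaV}, \cite{TV3} ``has no direct counterpart over $\F_p$,'' and you propose to rebuild \eqref{B1}, \eqref{B2} from scratch by combining new Stokes-type identities with an elimination among the $I_{l_1,l_2,m}$ via the singular-vector relations. That route is not viable as described: over $\C$ the dynamical equations \eqref{dyne}, \eqref{dyne2} are not consequences of the singular-vector (KZ) relations --- they are additional, independently proved equations --- so eliminating with the relations from part (i) cannot produce them, and you never specify the extra identities that would. The idea you are missing, which is how the paper closes exactly this step, is that the proof of \eqref{dyne} in \cite{MaV} already exhibits the difference $J_{0,0,0}(\al,\beta_1-1,\beta_2,\ga)\,v_{0,0,0}-J_{0,0,0}(\al,\beta_1,\beta_2,\ga)\,\Bbb B_1 v_{0,0,0}$ as the integral of an explicitly written exact form $\mu=\on{d}\nu$; by \cite[Section 4]{SV2} the form $\nu$ can be reduced modulo $p$, so Lemma \ref{lem St} annihilates the corresponding $\F_p$-integral, giving $I_{0,0,0}(a,b_1-1,b_2,c)\,v_{0,0,0}=I_{0,0,0}(a,b_1,b_2,c)\,\Bbb B_1 v_{0,0,0}$ in $V_{\la_1}^{\F_p}$; one then only checks that the explicit formula for $\Bbb B_1$ makes sense modulo $p$ under your nonvanishing hypotheses and that $\Bbb B_1 v_{0,0,0}=B_1 v_{0,0,0}$ there. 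In other words, the dynamical machinery does reduce modulo $p$ --- through its proof by exact forms --- and without importing that input your proposal leaves (ii) and (iii) unproved, as you in effect concede.
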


\begin{proof} Equation \eqref{II} is an $\F_p$-analog of equation
\eqref{JJ} and its proof is analogous to the proof of equation \eqref{JJ}. 

More precisely, consider the  complex Lie algebra $\slth$ 
with standard generators $f_1,f_2$, $e_1,e_2$, $h_1,h_2$,
simple roots $\si_1,\,\si_2$, fundamental weights  $\om_1,\,\om_2$.
Let $V_{\la_1}$, $V_{\la_2}$ be the irreducible $\slth$-modules with highest weights
\bea
\la_1= -\frac{a}{c+p}\,\om_1 \,,
\qquad
\la_2= -\frac{b_1}{c+p}\,\om_1 - \frac{b_2}{c+p}\,\om_2 \,
\eea
and highest weight vectors $v_1,\,v_2$. 
The module $V_{\la_1}$ has a basis $(f_1^{r_1}[f_1,f_2]^{r_{2}}v_1)$ labeled by nonnegative integers $r_1,r_2$
and the module $V_{\la_2}$ has a basis $(f_1^{r_1}[f_1,f_2]^{r_{2}}f_2^{r_3}v_2)$ labeled by nonnegative integers $r_1,r_2, r_3$.
For every generator of $\slth$ the matrix of its action on
$V_{\la_1}$ or on $V_{\la_2}$ in these bases  is a polynomial in $ -\frac{a}{c+p}$\,
$-\frac{b_1}{c+p},$ $ - \frac{b_2}{c+p}$ with integer coefficients.

Consider the Lie algebra 
$\frak{sl}_3$ over the field $\F_p$. Let $V_{\la_1}^{\F_p}$ be the vector space over $\F_p$ with 
 basis 
$(f_1^{r_1}[f_1,f_2]^{r_{2}}v_1)$ labeled by nonnegative integers $r_1,r_2$
and with the action of $\frak{sl}_3$ defined by the same formulas as on $V_{\la_1}$ 
but reduced modulo $p$. 
Similarly we define the $\frak{sl}_3$-module $V_{\la_2}^{\F_p}$. 

\vsk.2>
Recall  $\bs k=(k_1,k_2)$, $k_1> k_2> 0$.
Consider the weight subspace
$V_{\la_1}^{\F_p}\ox V_{\la_2}^{\F_p}[\la_1+\la_2-k_1\si_1-k_2\si_2]$ of the tensor product 
 $V_{\la_1}^{\F_p}\ox V_{\la_2}^{\F_p}$. This weight subspace has a basis 
  formed by the vectors
\bea
v_{l_1,l_2,m} = \frac{f_1^{k_1-k_2-l_1+l_2}[f_1,f_2]^{k_2-l_2}v_1\ox f_1^{l_1-m}[f_1,f_2]^{m}f_2^{l_2-m}v_2}
{(k_1-k_2-l_1+l_2)!\,(k_2-l_2)!\,(l_1-m)!\,m!\,(l_2-m)!}
\eea
labeled by allowable triples $(l_1,l_2,m)$. 

\begin{lem}
\label{lem sing p}
The vector 
\bea
 I = \sum_{l_1,l_2,m} (-1)^{l_1} I_{l_1,l_2,l_m} (a,b_1,b_2,c)\, v_{l_1,l_2,m}
 \eea
 is  a singular vector of $V_{\la_1}^{\F_p}\ox V_{\la_2}^{\F_p}$, that is, $e_1I=0, \,e_2I=0$.
\end{lem}

\begin{proof}
Equations  $e_1I=0, \,e_2I=0$ are $\F_p$-analogs of equations $e_1J=0$, \, $e_2J=0$ over  $\C$.

For $i=1,2$, the vector $e_iJ$ is the integral of a certain differential  $k_1+k_2$-form $\mu_i$. It is shown in  
\cite[Theorems 6.16.2]{SV2},  \cite[Theorem 2.4]{M} that $\mu_i= \on{d}\! \nu_i$, where $\nu_i$ is  some explicitly written  
differential $k_1+k_2-1$-form. This implies  $e_iJ=0$ by Stokes' theorem.

The vector $e_iI$ is the $\F_p$-integral of the same $\mu_i$  reduced modulo $p$.
It is explained in \cite[Section 4]{SV2} 
that the differential form $\nu_i$ also can be reduced modulo $p$ and this implies that the  $\F_p$-integral
$e_iI$   is zero by Lemma \ref{lem St}. Cf. the proof of \cite[Theorem 2.4]{SV2}.
\end{proof}

Lemma \ref{lem sing p} implies the equations
\bean
\label{II=0}
(k_1-k_2+i+1)c\,I_{0,i,0} +
(b_2 +ic)\,I_{0,i+1,0}  = 0
\eean
for $i=0,\dots,k_2-1$, similarly to the proof of equations \eqref{JJ=0}. 
The iterated application of equation \eqref{II=0} implies equation \eqref{II}.

\vsk.2>
The proof of equations \eqref{B1}, \eqref{B2} is parallel to the proof of equations \eqref{Dyne1}, \eqref{Dyne2}. 
We prove  \eqref{B1}. The proof of \eqref{B2} is similar.

Equation \eqref{Dyne1} follows from equation \eqref{dyne}:
\bean
\label{dyne11} 
J_{0,0,0}(\al,\beta_1-1,\beta_2,\ga) v_{0,0,0}
- J_{0,0,0}(\al,\beta_1,\beta_2,\ga) \,\Bbb B_1 v_{0,0,0}\,=\,0
\eean
and equation $\Bbb B_1 v_{0,0,0}= B_1 v_{0,0,0}$ in $V_{\la_1}$. 
The explicit formulas for $\Bbb B_1$ show that under the assumptions of Theorem \ref{thm relp}
the action of $\Bbb B_1$ on $v_{0,0,0}$ is well-defined modulo $p$ and gives
the same result
$\Bbb B_1 v_{0,0,0}= B_1 v_{0,0,0}$ but in $V_{\la_1}^{\F_p}$.

The proof  of \eqref{dyne11} in \cite{MaV}
goes as follows. The left-hand side of \eqref{dyne11} is a vector-valued integral of a suitable
differential $k_1+k_2$-form $\mu$.
It is shown in  
\cite[Theorem 5.1]{M} that $\mu= \on{d}\! \nu$, where $\nu$ is  some explicitly written  
differential $k_1+k_2-1$-form. This implies \eqref{dyne11} by Stokes' theorem.

The $p$-analog of the left-hand side of \eqref{dyne11} is 
the element
\bean
\label{dyne111} 
I_{0,0,0}(a,b_1-1,b_2,c) v_{0,0,0}
- I_{0,0,0}(a,b_1,b_2,c) \,\Bbb B_1 v_{0,0,0}\,\in \,V_{\la_1}^{\F_p}\,.
\eean
This element 
is the $\F_p$-integral of the same $\mu$ reduced modulo $p$.
It is explained in \cite[Section 4]{SV2} 
that the differential form $\nu$ also can be reduced modulo $p$ and this implies that the  $\F_p$-integral
in the left-hand side of \eqref{dyne111}   equals  zero by Lemma \ref{lem St}. Hence equation \eqref{B1} is proved
and Theorem \ref{thm relp} is proved.
\end{proof}

\subsection{Theorem \ref{thm main n} for $n=2$}

Recall the set of admissible parameters $\mc A_{\bs k}$ introduced in Section \ref{sec adp}
for $\bs k = (k_1,k_2)$, $k_1> k_2> 0$.

\begin{lem}
\label{lem S-1}
Assume that $(a,b_1-1,b_2,c), (a,b_1,b_2,c) \in \mc A_{\bs k}$. Then
\bean
\label{S-1}
S_{\bs k}(a,b_1-1,b_2,c) 
&=& 
S_{\bs k}(a,b_1,b_2,c)
\prod_{i=1}^{k_1-k_2}
\frac
{1+a+b_1+(i+k_1-2)c-p}
 {b_1+(i-1)c}
 \\
 \notag
 &\times &\prod_{i=1}^{k_2}
\frac
{2+a+b_1+b_2+(i+k_1-3)c-p}
{1+b_1+b_2+(i-2)c}\,.
\eean
Assume that $(a,b_1,b_2-1,c), (a,b_1,b_2,c) \in \mc A_{\bs k}$. Then
\bean
\label{S-2}
S_{\bs k}(a,b_1,b_2-1,c) 
&=& 
S_{\bs k}(a,b_1,b_2,c)
\prod_{i=1}^{k_2}
\frac{1+b_2+(i+k_2-k_1-2)c}
{b_2+(i-1)c}
 \\
 \notag
 &\times &\prod_{i=1}^{k_2}
\frac
{2+a+b_1+b_2+(i+k_1-3)c-p}
{1+b_1+b_2+(i-2)c}\,.
\eean

\end{lem}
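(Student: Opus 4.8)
The plan is to derive the difference equations for $S_{\bs k}$ directly from the difference equations for $I_{0,0,0}$ established in Theorem \ref{thm relp}, using the dictionary between $S_{\bs k}$ and $I_{0,0,0}$. The connecting identity is \eqref{IS}, which reads $I_{0,k_2,0}(a,b_1,b_2,c) = S_{\bs k}(a-1,b_1,b_2-1,c)$, together with the singular-vector relation \eqref{II}, namely $I_{0,k_2,0} = B_0(a,b_1,b_2,c)\,I_{0,0,0}$. Combining these two I would express $S_{\bs k}$ in terms of $I_{0,0,0}$ via a shift of arguments. Concretely, replacing $a\mapsto a+1$ and $b_2\mapsto b_2+1$ in \eqref{IS} gives $S_{\bs k}(a,b_1,b_2,c) = I_{0,k_2,0}(a+1,b_1,b_2+1,c) = B_0(a+1,b_1,b_2+1,c)\,I_{0,0,0}(a+1,b_1,b_2+1,c)$. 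The key observation is that since $B_0$ does not depend on $b_1$, the ratio $S_{\bs k}(a,b_1-1,b_2,c)/S_{\bs k}(a,b_1,b_2,c)$ equals the ratio $I_{0,0,0}(a+1,b_1-1,b_2+1,c)/I_{0,0,0}(a+1,b_1,b_2+1,c)$, and the latter is exactly $B_1(a+1,b_1,b_2+1,c)$ by \eqref{B1}.

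The main work is then the bookkeeping step: I would substitute the shifted arguments into the explicit formula for $B_1$ from \eqref{Df} and check that the resulting product agrees with the claimed right-hand side of \eqref{S-1}. Writing out $B_1(a+1,b_1,b_2+1,c)$, the first product becomes $\prod_{i=1}^{k_1-k_2}\frac{(a+1)+b_1+(i+k_1-2)c}{b_1+(i-1)c}$ and the second becomes $\prod_{i=1}^{k_2}\frac{(a+1)+b_1+(b_2+1)+(i+k_2-3)c}{b_1+(b_2+1)+(i-2)c}$. The reconciliation with \eqref{S-1} requires reducing modulo $p$: the admissibility hypotheses $(a,b_1,b_2,c)\in\mc A_{\bs k}$ via the inequalities \eqref{ine13} force the relevant numerator entries to exceed $p$, so that $a+b_1+b_2+\cdots$ is congruent to $a+b_1+b_2+\cdots-p$ in $\F_p$, which is precisely how the $-p$ terms appear in the stated formula. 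I would verify that each numerator factor $(a+1)+b_1+(i+k_1-2)c$ reduces to $1+a+b_1+(i+k_1-2)c-p$ and that the $k_2$-product shifts index correctly to produce $2+a+b_1+b_2+(i+k_1-3)c-p$ over $1+b_1+b_2+(i-2)c$. The second equation \eqref{S-2} is entirely parallel, using \eqref{B2} in place of \eqref{B1} with the same argument shift.

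The hard part is not the algebra but ensuring the reduction modulo $p$ is legitimate, i.e.\ that no factor in the numerators or denominators of the shifted $B_1$ (respectively $B_2$) vanishes in $\F_p$ and that the intended representative of each factorial argument lies in the correct range $[0,p)$. This is where the admissibility conditions of Lemma \ref{lem apar} enter essentially: the nonvanishing hypotheses in parts (ii) and (iii) of Theorem \ref{thm relp} must be seen to follow from $(a,b_1-1,b_2,c),(a,b_1,b_2,c)\in\mc A_{\bs k}$, and the appearance of the $-p$ shift in the numerators must be justified by \eqref{ine13}. I would therefore open by checking that the hypotheses of Theorem \ref{thm relp}(ii) hold under the lemma's assumptions (in particular $b_1>1$ and the nonvanishing of every factor of $B_1$), after which the derivation of \eqref{S-1} is a direct substitution, and \eqref{S-2} follows symmetrically.
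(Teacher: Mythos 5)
Your route is exactly the one the paper takes: its proof of this lemma is a one-line citation of \eqref{IS}, \eqref{II} and Theorem \ref{thm relp}, i.e.\ precisely your chain $S_{\bs k}(a,b_1,b_2,c)=I_{0,k_2,0}(a+1,b_1,b_2+1,c)=B_0(a+1,b_1,b_2+1,c)\,I_{0,0,0}(a+1,b_1,b_2+1,c)$ followed by the difference equations for $I_{0,0,0}$. However, your execution of the bookkeeping has two genuine flaws. First, \eqref{S-2} is \emph{not} ``entirely parallel'' to \eqref{S-1}: $B_0$ is independent of $b_1$ (which is why it cancels in the $b_1$-ratio), but it does depend on $b_2$. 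For the $b_2$-shift you must keep the factor $B_0(a+1,b_1,b_2,c)/B_0(a+1,b_1,b_2+1,c)=\prod_{i=1}^{k_2}\frac{1+b_2+(i-1)c}{b_2+(i-1)c}$, and this factor is essential: it is exactly what converts the denominators $1+b_2+(i-1)c$ coming from $B_2(a+1,b_1,b_2+1,c)$ into the denominators $b_2+(i-1)c$ that appear in \eqref{S-2}. The ``symmetric'' argument you propose, which drops this ratio, yields a formula that is simply false.

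Second, your reconciliation of the $k_2$-products is not a reconciliation. $B_1$ as printed in \eqref{Df} has numerator offsets $(i+k_2-3)c$, while \eqref{S-1} has $(i+k_1-3)c$, and no index shift identifies $\prod_{i=1}^{k_2}\bigl(2+a+b_1+b_2+(i+k_2-3)c\bigr)$ with $\prod_{i=1}^{k_2}\bigl(2+a+b_1+b_2+(i+k_1-3)c\bigr)$ when $k_1>k_2$: the denominators are identical in both, and the numerators run over different arithmetic progressions. Reduction mod $p$ cannot help either, since $p\equiv 0$ in $\F_p$; the ``$-p$'' in \eqref{S-1} only serves to make factorial arguments in \eqref{A2p} lie in $[0,p)$. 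What is actually going on is that \eqref{Df} (inherited from \eqref{Dyne1}, \eqref{Dyne2}) carries a misprint: the offsets in $B_1$ and $B_2$ must be $(i+k_1-3)c$. This is forced by consistency, since \eqref{S-1}--\eqref{S-2} are precisely the difference equations satisfied by $R_{\bs k}$ of \eqref{A2p} (Lemma \ref{lem R-1}), and it can be checked directly: for $p=5$, $\bs k=(2,1)$, $(a,c)=(1,1)$ one computes $S_{\bs k}(1,2,2,1)=3$, $S_{\bs k}(1,2,1,1)=2$, $S_{\bs k}(1,3,1,1)=2$ in $\F_5$, which satisfy the $k_1$-versions of \eqref{S-1}--\eqref{S-2} and violate the $k_2$-versions. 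So a complete write-up along your (and the paper's) lines must detect and correct this constant in Theorem \ref{thm relp}/\eqref{Df}, rather than assert that the printed $B_1$, $B_2$ agree with the stated right-hand sides.
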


\begin{proof} 
The lemma follows from formulas \eqref{IS} and \eqref{II} and Theorem \ref{thm relp}.
\end{proof}

For $n=2$ formula \eqref{Rbk} takes the form:
\bean
\label{A2p}
&&
R_{\bs k}(a,b_1,b_2,c)
 = (-1)^{k_1+k_2}\,\prod_{i=1}^{k_1-k_2}
\frac{(b_1+(i-1)c)!}
{(1+a+b_1+(i+k_1-2)c-p)!}
 \\
 \notag
 &&
\times\,
\prod_{i=1}^{k_2}\
\frac{(b_2+(i-1)c)!}
{(1+b_2+(i+k_2-k_1-2)c)!}\,
\
\frac{(1+b_1+b_2+(i-2)c)!}
{(2+a+b_1+b_2+(i+k_1-3)c-p)!}
\\
&&
\notag
\times \
\prod_{i=1}^{k_1}
(a+(i-1)c)!\,
\prod_{i=1}^{k_2}
(p+(i-k_1-1)c)!
\prod_{r=1}^2\prod_{i=1}^{k_r}
\frac{(ic)!}{c!}\,.
\eean

\begin{lem}
\label{lem R-1}
Assume that $(a,b_1-1,b_2,c), (a,b_1,b_2,c) \in \mc A_{\bs k}$. Then
\bean
\label{R-1}
R_{\bs k}(a,b_1-1,b_2,c) 
&=& 
R_{\bs k}(a,b_1,b_2,c)
\prod_{i=1}^{k_1-k_2}
\frac
{1+a+b_1+(i+k_1-2)c-p}
 {b_1+(i-1)c}
 \\
 \notag
 &\times &\prod_{i=1}^{k_2}
\frac
{2+a+b_1+b_2+(i+k_1-3)c-p}
{1+b_1+b_2+(i-2)c}\,.
\eean
Assume that $(a,b_1,b_2-1,c), (a,b_1,b_2,c) \in \mc A_{\bs k}(a,b_1,b_2,c)$. Then
\bean
\label{R-2}
R_{\bs k}(a,b_1,b_2-1,c) 
&=& 
R_{\bs k}(a,b_1,b_2,c)
\prod_{i=1}^{k_2}
\frac{1+b_2+(i+k_2-k_1-2)c}
{b_2+(i-1)c}
 \\
 \notag
 &\times &\prod_{i=1}^{k_2}
\frac
{2+a+b_1+b_2+(i+k_1-3)c-p}
{1+b_1+b_2+(i-2)c}\,.
\eean
\qed

\end{lem}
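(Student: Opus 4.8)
The final statement to prove is Lemma \ref{lem R-1}, which establishes two difference relations for the explicit product $R_{\bs k}(a,b_1,b_2,c)$ given in closed form by \eqref{A2p}. Since $R_{\bs k}$ is a completely explicit product of factorials, this lemma is a direct computation: one shifts $b_1\to b_1-1$ (respectively $b_2\to b_2-1$) in \eqref{A2p} and tracks how each factorial factor changes. The plan is to take the ratio $R_{\bs k}(a,b_1-1,b_2,c)/R_{\bs k}(a,b_1,b_2,c)$ term by term and verify it equals the claimed product.

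\textbf{Proof of \eqref{R-1}.} First I would identify which factors in \eqref{A2p} depend on $b_1$. Shifting $b_1\to b_1-1$ affects exactly three families of factorials: the factor $\prod_{i=1}^{k_1-k_2}(b_1+(i-1)c)!$ in the numerator of the first line, the factor $\prod_{i=1}^{k_2}(1+b_1+b_2+(i-2)c)!$ in the numerator of the second line, and the two denominators $\prod_{i=1}^{k_1-k_2}(1+a+b_1+(i+k_1-2)c-p)!$ and $\prod_{i=1}^{k_2}(2+a+b_1+b_2+(i+k_1-3)c-p)!$. Using the elementary identity $(m-1)!/m! = 1/m$, each shifted numerator factorial contributes a factor $1/(b_1+(i-1)c)$ and $1/(1+b_1+b_2+(i-2)c)$, while each shifted denominator factorial (appearing in the denominator of $R$, hence inverted) contributes $(1+a+b_1+(i+k_1-2)c-p)$ and $(2+a+b_1+b_2+(i+k_1-3)c-p)$ respectively. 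Collecting these four products yields exactly the right-hand side of \eqref{R-1}, and I would remark that the admissibility hypothesis $(a,b_1-1,b_2,c),(a,b_1,b_2,c)\in\mc A_{\bs k}$ guarantees all the factorials involved are factorials of integers in the range $[0,p-1]$, so that the identity $(m-1)!/m!=1/m$ is valid in $\F_p$ and no factor is identically zero.

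\textbf{Proof of \eqref{R-2}.} The computation is entirely parallel. Shifting $b_2\to b_2-1$ affects the factors depending on $b_2$: the numerator $\prod_{i=1}^{k_2}(b_2+(i-1)c)!$, the denominator $\prod_{i=1}^{k_2}(1+b_2+(i+k_2-k_1-2)c)!$, the numerator $\prod_{i=1}^{k_2}(1+b_1+b_2+(i-2)c)!$, and the denominator $\prod_{i=1}^{k_2}(2+a+b_1+b_2+(i+k_1-3)c-p)!$. The factors $\prod_{i=1}^{k_2}(p+(i-k_1-1)c)!$ and the $(ic)!/c!$ terms are independent of $b_2$ and cancel in the ratio. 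Applying $(m-1)!/m!=1/m$ in each family, the $(b_2+(i-1)c)!$ numerator and the $(1+b_2+(i+k_2-k_1-2)c)!$ denominator combine to give $\prod_{i=1}^{k_2}\frac{1+b_2+(i+k_2-k_1-2)c}{b_2+(i-1)c}$, while the remaining two families give $\prod_{i=1}^{k_2}\frac{2+a+b_1+b_2+(i+k_1-3)c-p}{1+b_1+b_2+(i-2)c}$. This is precisely the right-hand side of \eqref{R-2}.

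There is no real obstacle here: the lemma is a bookkeeping verification about a known closed-form expression, not a structural result, and it is the formal counterpart to Lemma \ref{lem S-1} (which establishes the identical difference relations for the \emph{integral} $S_{\bs k}$). The only point requiring a little care is confirming that each factorial argument stays in the admissible range $[0,p-1]$ under both shifts, so that the manipulations are legitimate in $\F_p$; this is exactly what membership of both $(a,b_1-1,b_2,c)$ and $(a,b_1,b_2,c)$ (respectively the $b_2$-shift) in $\mc A_{\bs k}$ provides, via the inequalities of Lemma \ref{lem apar}. Once \eqref{R-1} and \eqref{R-2} are established, comparing them with \eqref{S-1} and \eqref{S-2} shows that $S_{\bs k}$ and $R_{\bs k}$ satisfy the same two difference equations, which is the engine that will drive the induction proving Theorem \ref{thm main n} for $n=2$.
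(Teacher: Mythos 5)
Your proposal is correct: the paper itself offers no written proof of this lemma (it is stated with a \qed as an immediate verification from the closed form \eqref{A2p}), and your term-by-term ratio computation, using $m!=m\cdot(m-1)!$ on exactly the four families of $b_1$-dependent (resp.\ $b_2$-dependent) factorials together with the admissibility conditions to ensure every divisor is a nonzero element of $\F_p$, is precisely the intended bookkeeping. Nothing is missing.
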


By Lemmas \ref{lem S-1} and \ref{lem R-1}
the functions $S_{\bs k}(a,b_1,b_2,c)$ and $R_{\bs k}(a,b_1,b_2,c)$
defined on $\mc A_{\bs k}$ satisfy the same difference equations with respects to the shifts of variables
$b_1\to b_1-1$ and $b_2\to b_2-1$.

\begin{lem}
\label{lem verti}
Assume that $a, c$ are positive integers such that $0\,<k_1c\,\leq p-1$, 
\\
$a+(k_1-1)c \,<\,p-1$.
Then the point
\bean
\label{tr sp}
(a,b_1,b_2,c) = (a, p-1 - (a +(k_1-1)c), (k_1-k_2+1)c-1, c)
\eean
lies in $\mc A_{\bs k}$.

\end{lem}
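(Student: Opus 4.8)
The plan is to verify directly that the proposed point satisfies every defining inequality of $\mc A_{\bs k}$ as listed in Lemma \ref{lem apar} for $n=2$, namely the systems \eqref{ine1}, the $(s,r)=(2,2)$ case of \eqref{ine2}, \eqref{ine13}, and \eqref{ine14} (with $k_0=k_3=0$, $a_1=a$, $a_2=0$). The organizing observation is that this is exactly the base point singled out in Corollary \ref{cor ind} for $n=2$: here $b_1=p-1-(a+(k_1-1)c)$ and $b_2=(k_1-k_2+1)c-1$, and the two hypotheses $0<k_1c\le p-1$ and $a+(k_1-1)c<p-1$ are arranged precisely so that the point sits on the boundary of the admissible polytope. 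I expect the two equations defining $b_1$ and $b_2$ to realize exactly the tight cases: the middle inequality of \eqref{ine14} holds with equality by construction, the first inequality of \eqref{ine2} for $(s,r)=(2,2)$ evaluates to $0\le 0$, and the first inequality of \eqref{ine13} for $r=1$ evaluates to $p\le p$. Recognizing these three tight constraints as the structural reason for the choice of $b_1,b_2$ is what guides the whole computation.

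First I would check the positivity needed for $(a,b_1,b_2,c)\in\Z_{>0}^{4}$: the hypotheses give $a,c>0$ at once; $b_1=p-1-(a+(k_1-1)c)>0$ is equivalent to the assumed $a+(k_1-1)c<p-1$; and $b_2=(k_1-k_2+1)c-1\ge 2c-1\ge 1$ since $k_1>k_2$ forces $k_1-k_2+1\ge 2$ and $c\ge 1$. Next I would substitute and simplify the remaining inequalities using the auxiliary identities $a+b_1=p-1-(k_1-1)c$ and $a+b_1+b_2=p-2+(2-k_2)c$, with which each verification collapses to a short linear computation. After simplification every upper bound takes the form $(k_2-1)c\le p-1$ (the second part of \eqref{ine2}), $(k_1-k_2-1)c<p$ (the second part of \eqref{ine13} at $r=1$), $(k_1-1)c<p$ (at $r=2$), or $k_1c<p$ (in \eqref{ine14}); all of these follow immediately from $k_1c\le p-1$ together with $k_1>k_2\ge 1$. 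The non-tight lower bounds reduce to statements such as $a+k_2c\ge 0$, $a\ge 0$, $(k_1-k_2)c\ge 0$, and $p-1-a\ge (k_2-1)c$, the last of which follows from $a+(k_1-1)c<p-1$ and $k_1-1\ge k_2-1$.

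Since the argument is a finite, explicit check, I do not anticipate a genuine obstacle beyond careful bookkeeping; the only place demanding attention is matching the coefficients of $c$ in \eqref{ine1}--\eqref{ine13} after setting $k_0=k_3=0$ and $a_2=0$, so that no sign or index slip occurs. The conceptual payoff is that the three tight inequalities identified above are exactly the facets on which the point lies, which is what makes it the natural vertex of $\mc A_{\bs k}$ serving as the base case for the inductive reduction in Corollary \ref{cor ind}.
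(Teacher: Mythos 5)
Your proposal is correct and takes essentially the same route as the paper: a direct substitution-and-check that the special point is admissible. The paper carries out the check by plugging the point into \eqref{A2p} and observing that every factorial argument simplifies to an expression manifestly lying in $[0,p)$ under the two hypotheses, which is exactly the content of the inequality list of Lemma \ref{lem apar} that you verify term by term, so the two computations coincide.
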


\begin{proof} If $(a,b_1,b_2,c)$ is given by \eqref{tr sp}, 
then
\bea
&&
R_{\bs k}
 = (-1)^{k_1+k_2}\,\prod_{i=1}^{k_1-k_2}
\frac{(p-1-(a+(k_1-i)c))!}
{((i-1)c)!}
 \\
 \notag
 &&
\times\,
\prod_{i=1}^{k_2}\
\frac{((k_1-k_2+i)c-1)!}
{((i-1)c)!}\,
\
\frac{(p-1-(a+(k_2-i)c))!}
{((k_1-k_2+i-1)c)!}
\\
&&
\notag
\times \
\prod_{i=1}^{k_1}
(a+(i-1)c)!\,
\prod_{i=1}^{k_2}
(p+(i-k_1-1)c)!
\prod_{r=1}^2\prod_{i=1}^{k_r}
\frac{(ic)!}{c!}\,.
\eea
This proves the lemma.
\end{proof}

\begin{lem}
\label{lem convex}
Assume that $\tilde a, \tilde c$ are nonnegative integers such that $0<k_1\tilde c\,\leq p-1$, $\tilde a+(k_1-1)\tilde c \,\leq\,p-1$.
Denote by
$\mc A_{\bs k}(\tilde a,\tilde c)$ the set of all $(a,b_1,b_2,c)\in \mc A_{\bs k}$ such that $a=\tilde a$, $c = \tilde c$.
Then 
$\mc A_{\bs k}(\tilde a,\tilde c)$ consists of the pairs $(b_1,b_2)$
of nonnegative integers satisfying the inequalities
\bean
\label{IN1}
p-1 - (\tilde a +(k_1-1)\tilde c)\leq b_1,\qquad
 (k_1-k_2+1)\tilde c-1\leq b_2
 \eean
 and some other inequalities of the form
\bean
\label{IN2}
b_1\leq A_1,
\qquad b_2\leq A_2\qquad
b_1+b_2\leq A_{12},
\eean
where $A_1,\,A_2,\,A_{12}$ are some integers such that
\bea
&&
A_1\geq p-1 - (\tilde a +(k_1-1)\tilde c),\quad
A_2 \geq (k_1-k_2+1)\tilde c-1,
\\
&&
A_{12}\geq p -1 - (\tilde a +(k_1-1)\tilde c) + (k_1-k_2+1)\tilde c-1\,.
\eea
\end{lem}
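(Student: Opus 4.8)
The plan is to show that $\mc A_{\bs k}(\tilde a,\tilde c)$ is exactly the set of lattice points $(b_1,b_2)\in\Z_{\ge0}^2$ cut out by the full system of Lemma \ref{lem apar} after specializing $a=\tilde a$, $c=\tilde c$. I would start from the four families of inequalities \eqref{ine1}--\eqref{ine14} and sort them, for fixed $(\tilde a,\tilde c)$, into those that provide lower bounds on $b_1$ or $b_2$ and those that provide upper bounds. The key observation is that each inequality is affine in the $b_s$ and, crucially, every $b_s$ appears with coefficient $+1$ in the partial sums $b_s+\dots+b_r$; consequently every constraint involving the $b$'s is of one of exactly three shapes: a bound on $b_1$ alone, a bound on $b_2$ alone, or a bound on the sum $b_1+b_2$. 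This is what forces the normal form \eqref{IN2}.

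First I would isolate the \emph{lower} bounds. The two inequalities I must recover as \eqref{IN1} are already recorded in Lemma \ref{lem ex p}: the bound $b_1\ge p-1-(\tilde a+(k_1-1)\tilde c)$ comes from the first inequality in \eqref{ine13} at $r=1$, and $b_2\ge (k_1-k_2+1)\tilde c-1$ comes from the first inequality in \eqref{ine2} at $s=r=2$ (for $n=2$, so $k_{s-1}=k_1$). I would then check that no other inequality in the system gives a \emph{stronger} lower bound on $b_1$, on $b_2$, or any lower bound on $b_1+b_2$ beyond what these two imply; since all remaining lower-bound-type inequalities (the first lines of \eqref{ine1} and the positivity $b_s\ge0$) are weaker consequences, the effective lower constraints are precisely \eqref{IN1}.

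Next I would collect the \emph{upper} bounds. The second lines of \eqref{ine1}, \eqref{ine2}, \eqref{ine13}, together with \eqref{ine14}, all give upper bounds; grouping them by which of $b_1$, $b_2$, $b_1+b_2$ they constrain produces finitely many inequalities $b_1\le(\text{const})$, $b_2\le(\text{const})$, $b_1+b_2\le(\text{const})$. Taking $A_1$, $A_2$, $A_{12}$ to be the \emph{minima} of the respective right-hand sides over all such inequalities gives \eqref{IN2}. The claimed inequalities $A_1\ge p-1-(\tilde a+(k_1-1)\tilde c)$, $A_2\ge(k_1-k_2+1)\tilde c-1$, and $A_{12}\ge p-1-(\tilde a+(k_1-1)\tilde c)+(k_1-k_2+1)\tilde c-1$ are then precisely the statement that $\mc A_{\bs k}(\tilde a,\tilde c)$ is nonempty, i.e.\ that the lower corner $\bigl(p-1-(\tilde a+(k_1-1)\tilde c),\,(k_1-k_2+1)\tilde c-1\bigr)$ actually satisfies every upper bound. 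I would verify this by substituting that corner point (which is exactly the transition point of Lemma \ref{lem verti}) into each upper-bound inequality and checking it holds under the hypotheses $0<k_1\tilde c\le p-1$ and $\tilde a+(k_1-1)\tilde c\le p-1$; this reuses the computation already carried out in the proof of Lemma \ref{lem verti}, where $R_{\bs k}$ at that point is shown to involve only factorials of integers in $[0,p)$.

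The main obstacle, I expect, is the bookkeeping in the upper-bound step: one must confirm that \emph{every} upper-bound inequality in the system is of one of the three permitted shapes (pure $b_1$, pure $b_2$, or $b_1+b_2$) and that none secretly imposes, say, a lower bound on the sum or couples the variables with unequal coefficients. For general $n$ this would be delicate, but in the case $n=2$ relevant here the partial sums $b_s+\dots+b_r$ can only be $b_1$, $b_2$, or $b_1+b_2$, so the shape classification is automatic and the argument reduces to the routine verification that the corner point meets all the upper constraints. The convexity asserted implicitly by the normal form \eqref{IN2} is then immediate, since the feasible region is an intersection of half-planes.
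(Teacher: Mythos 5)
Your proposal is correct and is essentially the paper's own argument: the paper proves this lemma in one line, as a direct consequence of Lemmas \ref{lem apar} and \ref{lem ex p}, which is precisely your procedure of specializing the system \eqref{ine1}--\eqref{ine14} to $n=2$, reading off the lower bounds \eqref{IN1} from Lemma \ref{lem ex p}, and observing that all remaining constraints on $(b_1,b_2)$ are upper bounds of the three admissible shapes in \eqref{IN2}. Your additional step of certifying the inequalities on $A_1,A_2,A_{12}$ by substituting the corner point of Lemma \ref{lem verti} merely makes explicit what the paper leaves to inspection, and matches how that corner point is used immediately afterwards in Corollary \ref{cor 5.7}.
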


\begin{proof}  The lemma follows from Lemmas \ref{lem apar} and \ref{lem ex p}.
\end{proof}

\begin{cor}
\label{cor 5.7}

Any point $(\tilde a, b_1,b_2,\tilde c) \in \mc A_{\bs k}(\tilde a,\tilde c)$
can be connected with the point $(\tilde a, p-1-(\tilde a+(k_1-1)\tilde c), 
(k_1-k_2+1)\tilde c-1,\tilde c)\in \mc A_{\bs k}(\tilde a,\tilde c)$
by a piece-wise linear 
path in  $\mc A_{\bs k}(\tilde a,\tilde c)$ consisting of the vectors $(0,-1,0,0)$ or $(0,0,-1,0)$.
\qed

\end{cor}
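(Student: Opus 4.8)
The plan is to read off the precise shape of the slice $\mc A_{\bs k}(\tilde a,\tilde c)$ from Lemma \ref{lem convex} and then descend to the corner by a greedy two-phase walk. Fixing $a=\tilde a$ and $c=\tilde c$ throughout (so both admissible steps keep the first and last coordinates constant), the set of admissible pairs $(b_1,b_2)$ is cut out by the two lower bounds
\be
b_1\ge L_1:=p-1-(\tilde a+(k_1-1)\tilde c),\qquad b_2\ge L_2:=(k_1-k_2+1)\tilde c-1,
\ee
together with finitely many upper-bound inequalities, each of the form $b_1\le A_1$, $b_2\le A_2$, or $b_1+b_2\le A_{12}$, where $A_1\ge L_1$, $A_2\ge L_2$, $A_{12}\ge L_1+L_2$. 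The target point is exactly $(L_1,L_2)$, the corner where both lower bounds are tight; it lies in $\mc A_{\bs k}(\tilde a,\tilde c)$ by Lemma \ref{lem verti}.

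The key structural point I would exploit is monotonicity: each upper-bound functional $b_1$, $b_2$, $b_1+b_2$ is nondecreasing in each coordinate, so applying a step $(0,-1,0,0)$ or $(0,0,-1,0)$ can only decrease its value and hence can never cause an upper bound to be violated. Consequently, along any path that only decreases coordinates the upper-bound constraints take care of themselves, and the sole danger is dropping $b_1$ below $L_1$ or $b_2$ below $L_2$ (which also guarantees positivity, since $L_1,L_2\ge 1$).

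Concretely, I would run a lexicographic descent in two phases. Starting from an arbitrary $(\tilde a,b_1,b_2,\tilde c)\in\mc A_{\bs k}(\tilde a,\tilde c)$, Phase~1 applies $(0,-1,0,0)$ until $b_1$ reaches $L_1$, holding $b_2$ fixed; Phase~2 then applies $(0,0,-1,0)$ until $b_2$ reaches $L_2$. In Phase~1 every intermediate pair $(b_1',b_2)$ with $L_1\le b_1'\le b_1$ satisfies $b_1'\le A_1$, $b_2\le A_2$, and $b_1'+b_2\le b_1+b_2\le A_{12}$, so it stays admissible. In Phase~2 every intermediate pair $(L_1,b_2')$ with $L_2\le b_2'\le b_2$ satisfies $L_1\le A_1$ (as $A_1\ge L_1$), $b_2'\le A_2$, and $L_1+b_2'\le L_1+b_2\le b_1+b_2\le A_{12}$, the last step using $L_1\le b_1$; hence it too stays admissible. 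The walk terminates at $(L_1,L_2)$, the asserted target.

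I do not expect a genuine obstacle: the entire argument is a monotonicity-plus-bookkeeping verification resting on Lemmas \ref{lem convex} and \ref{lem verti}. The one place deserving a moment of care is the mixed constraint $b_1+b_2\le A_{12}$ during Phase~2, where one must invoke $L_1\le b_1$ (valid because the starting point already lies in the slice, hence satisfies $b_1\ge L_1$) to bound $L_1+b_2\le b_1+b_2\le A_{12}$. Everything else is immediate from the fact that descending steps relax every upper bound while the descent is halted precisely at the lower bounds.
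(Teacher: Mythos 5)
Your proof is correct and is essentially the argument the paper intends: the corollary is stated with no proof precisely because, given Lemma \ref{lem convex}, the region is cut out by the two lower bounds (saturated at the target point, which is admissible by Lemma \ref{lem verti}) together with upper bounds on $b_1$, $b_2$, $b_1+b_2$ that are monotone nondecreasing in each coordinate, so a coordinate-wise descent to $(L_1,L_2)$ never leaves the set. Your two-phase walk, including the check $L_1+b_2'\le b_1+b_2\le A_{12}$ in the second phase, is exactly this bookkeeping made explicit.
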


\smallskip
\noindent
{\it Proof of Theorem \ref{thm main n} for $n=2$}.
For $n=1$,\, $\bs k=(k_1)$ and the point $(\tilde a, p-1-(\tilde a+(k_1-1)\tilde c), \tilde c)$
formula \eqref{main n} holds by \cite[Theorem 4.1]{RV}. 

For $n=2$, \,$\bs k = (k_1,k_2)$ and  the point 
$(\tilde a, p-1-(\tilde a+(k_1-1)\tilde c), 
(k_1-k_2+1)\tilde c-1,\tilde c)$ 
formula \eqref{main n} holds by Lemma \ref{lem verti} and Theorem \ref{thm ind}.

For $n=2$,\, $\bs k = (k_1,k_2)$  and arbitrary
$(\tilde a, b_1,b_2,\tilde c) \in \mc A_{\bs k}(\tilde a,\tilde c)$
formula \eqref{main n} holds by Lemmas \ref{lem S-1}, \ref{lem R-1} and Corollary \ref{cor 5.7}.
Theorem \ref{thm main n} for $n=2$ is proved.
\qed

\subsection{Evaluation of $I_{0,0,0}(a,b_1,b_2,c)$}

In this section we evaluate $I_{0,0,0}(a,b_1,b_2,c)$ without using the evaluation of
$S_{\bs k}(a,b_1,b_2,c)$.

\begin{thm}
\label{thm I}
Let $\bs k=(k_1,k_2)$, $k_1>k_2> 0$ and  $a,b_1,b_2,c\in\Z_{>0}$.
Assume that
\\
 $a+(k_1-1)c<p$ and all factorials in the right-hand side of the next formula 
are factorials of the nonnegative integers less than $p$.
Then
\bean
\label{Last}
&&
I_{0,0,0}(a,b_1,b_2,c)
 = (-1)^{k_1+k_2}\,\prod_{i=1}^{k_1-k_2}
\frac{(b_1+(i-1)c)!}
{(a+b_1+(i+k_1-2)c-p)!}
 \\
 \notag
 &&
\times\,
\prod_{i=1}^{k_2}\
\frac{(b_2+(i-1)c)!}
{(b_2+(i+k_2-k_1-2)c)!}\,
\
\frac{(b_1+b_2+(i-2)c)!}
{(a+b_1+b_2+(i+k_1-3)c-p)!}
\\
&&
\notag
\times \
\prod_{i=1}^{k_1}
(a+(i-1)c-1)!\,
\prod_{i=1}^{k_2}
(p+(i-k_1-1)c-1)!
\prod_{r=1}^2\prod_{i=1}^{k_i}
\frac{(ic)!}{c!}\,.
\eean
\end{thm}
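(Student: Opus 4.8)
The plan is to pin down $I_{0,0,0}(a,b_1,b_2,c)$ from the two difference equations it already obeys together with a single directly computable value, in complete parallel with the way Theorem \ref{thm main n} for $n=2$ was obtained from Lemmas \ref{lem S-1}, \ref{lem R-1} and Corollary \ref{cor 5.7}. By parts (ii) and (iii) of Theorem \ref{thm relp} we already have
\bea
I_{0,0,0}(a,b_1-1,b_2,c)&=&B_1(a,b_1,b_2,c)\,I_{0,0,0}(a,b_1,b_2,c),
\\
I_{0,0,0}(a,b_1,b_2-1,c)&=&B_2(a,b_1,b_2,c)\,I_{0,0,0}(a,b_1,b_2,c),
\eea
with $B_1,B_2$ as in \eqref{Df}. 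Writing $\rho(a,b_1,b_2,c)$ for the right-hand side of \eqref{Last}, it therefore suffices to show that $\rho$ satisfies these same two recursions and that $I_{0,0,0}=\rho$ at one base point.

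First I would check the recursions for $\rho$. This is a factorial computation: the substitution $b_1\mapsto b_1-1$ sends each factorial $(\cdots+b_1+\cdots)!$ appearing in \eqref{Last} to its predecessor, so the numerators of $\rho$ contribute reciprocals of linear factors and its denominators contribute linear factors. Since $p\equiv 0$ in $\F_p$, the ``$-p$'' carried by the denominator arguments in \eqref{Last} is invisible and the resulting ratio should reproduce the factors of $B_1$, and likewise of $B_2$ under $b_2\mapsto b_2-1$. The genuinely delicate points are those where a shifted factorial argument would cross a multiple of $p$; there I would apply the cancellation Lemma \ref{lem ca} to trade the boundary factorials for signs, which is also the mechanism that accounts for the global sign $(-1)^{k_1+k_2}$ and for the factors $(p+(i-k_1-1)c-1)!$ in \eqref{Last}. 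Some care is needed to reconcile the $(k_1-k_2)$-dependent index ranges of $B_1,B_2$ with the ranges in \eqref{Last}, and I would track these shifts explicitly rather than trust a formal match.

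It then remains to evaluate $I_{0,0,0}$ at one base point. Using the recursions I would move to the corner $b_2=(k_1-k_2+1)c$, which by the connectedness argument of Corollary \ref{cor 5.7} is reachable from any point of the region by steps $b_1\mapsto b_1-1$, $b_2\mapsto b_2-1$. At this corner I would combine \eqref{IS} and \eqref{II}, namely $B_0\,I_{0,0,0}(a,b_1,b_2,c)=I_{0,k_2,0}(a,b_1,b_2,c)=S_{\bs k}(a-1,b_1,b_2-1,c)$, so that the value of $S_{\bs k}$ needed is at the second weight $b_2-1=(k_1-k_2+1)c-1$. For precisely that value Corollary \ref{cor ind} evaluates $S_{\bs k}$ through the Dyson-type factorization of Theorem \ref{thm ind}, reducing it to the $A_1$ integral $S_{(k_1)}(a-1,b_1,c)$ of \cite{RV}; crucially this invokes only the $A_1$ case and never the $A_2$ formula for $S_{\bs k}(a,b_1,b_2,c)$ itself, as the statement requires. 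Dividing by $B_0$ and simplifying with Lemma \ref{lem ca} should then match $\rho$ at the corner, completing the determination of $I_{0,0,0}$. I expect the main obstacle to be exactly this last bookkeeping: verifying on the nose that $\rho$ satisfies the $B_1,B_2$ recursions and that the corner value $B_0^{-1}S_{\bs k}(a-1,b_1,(k_1-k_2+1)c-1,c)$ equals $\rho$, with all ``$-p$'' offsets, index shifts, and sign contributions from Lemma \ref{lem ca} correctly reconciled.
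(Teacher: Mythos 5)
Your proposal is correct, and its skeleton is the paper's: the difference equations of Theorem \ref{thm relp}(ii),(iii), a verification that the right-hand side of \eqref{Last} satisfies the same recursions, a base-point evaluation at the minimal value $b_2=(k_1-k_2+1)c$, and a connectivity argument as in Corollary \ref{cor 5.7}. Where you genuinely differ is the base case. The paper proves a dedicated statement (Lemma \ref{lem int I}): at $b_2=(k_1-k_2+1)c$ it runs the Dyson-formula coefficient extraction directly on the integrand of $I_{0,0,0}$ (all $s$-variables are forced to the top power), reducing it to $\frac{(k_2c)!}{(c!)^{k_2}}\,S_{(k_1)}(a-1,b_1,c)$ up to sign, and then invokes \cite[Theorem 4.1]{RV}; this keeps the section true to its announced aim of evaluating $I_{0,0,0}$ without ever touching $S_{\bs k}$. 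You instead recycle already-proven facts: the singular-vector relation \eqref{II} from Theorem \ref{thm relp}(i), the identity \eqref{IS}, and Corollary \ref{cor ind} applied to $S_{\bs k}(a-1,b_1,(k_1-k_2+1)c-1,c)$. This is legitimate and non-circular, exactly as you say, because Corollary \ref{cor ind} rests only on Theorem \ref{thm ind} and the $A_1$ formula, never on Theorem \ref{thm main n} for $n=2$. What your route buys is that no second Dyson computation is needed; what it costs is two extra checks the paper's route avoids: (a) every factor of $B_0$ in \eqref{Df} must be invertible at the corner — they are, since the denominators $b_2+ic$ become $(k_1-k_2+1+i)c,\dots$, i.e.\ nonzero multiples of $c$ up to $k_1c\le p-1$ (forced by the factorial $(p-k_1c-1)!$ in \eqref{Last}), and in fact $B_0=(-1)^{k_2}$ there, which simplifies your final bookkeeping; and (b) $(a-1,b_1,c)$ must satisfy the $A_1$ admissibility \eqref{abc 1 ineq}, which does follow from the hypotheses of the theorem at $b_2=(k_1-k_2+1)c$ (for instance $b_1+(k_1-1)c\le p-1$ comes from the factorial $(b_1+b_2+(k_2-2)c)!$). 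Both routes terminate at the $A_1$ result of \cite{RV}, and the remaining factorial matching via Lemma \ref{lem ca}, which you correctly flag as the main bookkeeping burden, is of the same nature in either version.
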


\begin{proof}
The proof is parallel to the proof of Theorem \ref{thm main n} for $n=2$.
\vsk.2>

Denote by $\mc A^I_{\bs k}$ the set of all $a,b_1,b_2,c\in\Z_{>0}$ satisfying the assumptions of Theorem \ref{thm I}.
 Notice that if $(a,b_1,b_2,c)\in \mc A^I_{\bs k}$, then
\bean
\label{ine I}
b_1\geq p-(a+(k_1-1)c), \qquad b_2\geq (k_1-k_2+1)c.
\eean

\begin{lem}
\label{lem int I} 

Formula \eqref{Last} holds if $b_2=(k_1-k_2+1)c$ and
$(a, b_1, (k_1-k_2+1)c, c)\in \mc A^I_{\bs k}$.

\end{lem}

\begin{proof}
If  $b_2=(k_{1}-k_2+1)c$, then all variables $(s_j)$
in the integrand of
 $I_{0,0,0}(a, b_1,(k_1-k_2+1)c, c)$
 are used  to reach
the monomial $\prod_{j=1}^{k_n} s_j^{k_{1}p-1}$ in the calculation of the $\F_p$-integral
$I_{0,0,0}(a, b_1, (k_1-k_2+1)c, c)$.  The remaining free variables 
$(t_i)$ all belong to the factor
\bea
\Phi_{(k_1)}(t_1,\dots,t_{k_1}, a-1, b_1,c)=
\prod_{1\leq i<i'\leq k_1}(t_i-t_{i'})^{2c}\prod_{i=1}^{k_1} t_i^{a-1}(1-t_i)^{b_1}
\eea
of the integrand and are used to calculate the coefficient of the monomial
$\prod_{j=1}^{k_1} t_i^{p-1}$.

More precisely, under the assumptions of the theorem we have
\bea
I_{0,0,0}(a, b_1, (k_1-k_2+1)c, c)  = 
  (-1)^{b_2k_2+ck_2(k_2-1)/2}\frac{(k_2c)!}{(c!)^{k_2}}
     \,S_{(k_1)}(a-1,b_1,c),
\eea
cf. the proof of Theorem \ref{thm ind}.
We have 
$  (-1)^{b_2k_2+ck_2(k_2-1)/2}=  (-1)^{ (k_{1}k_2-k_2(k_2+1)/2)c-k_2}$.

By \cite[Theorem 4.1]{RV} we have
$S_{(k_1)}(a-1,b_1,c) = R_{(k_1)}(a-1,b_1,c)$.  Hence
\bean
\label{I sp}
&&
I_{0,0,0}(a, b_1, (k_1-k_2+1)c, c)  =  (-1)^{ (k_{1}k_2-k_2(k_2+1)/2)c} (-1)^{k_1+k_2}
\\
\notag
&&
\phantom{aaaaaaaaaa}
\times \,  \frac{(k_2c)!}{(c!)^{k_2}} \
     \prod_{j=1}^{k_1} \frac{(jc)!}{c!}\,
\frac{(a+(j-1)c-1)!\,(b+(j-1)c)!}
{(a+b + (k_1+j-2)c-p)!}\,.
\eean
Denote by $R^I_{\bs k}(a,b_1,b_2,c)$ the right-hand side in \eqref{Last}.
We have
\bean
\label{RI sp}
&&
\\
&&
\notag
R^I_{\bs k}(a,b_1,(k_1-k_2+1)c,c)
 = (-1)^{k_1+k_2}\,\prod_{i=1}^{k_1-k_2}
\frac{(b_1+(i-1)c)!}
{(a+b_1+(i+k_1-2)c-p)!}
 \\
 \notag
 &&
\times\,
\prod_{i=1}^{k_2}\
\frac{((k_1-k_2+1)c+(i-1)c)!}
{((k_1-k_2+1)c+(i+k_2-k_1-2)c)!}\,
\\
 \notag
 &&
\times\,
\prod_{i=1}^{k_2}\
\frac{(b_1+(k_1-k_2+1)c+(i-2)c)!}
{(a+b_1+(k_1-k_2+1)c+(i+k_1-3)c-p)!}
\\
&&
\notag
\times \
\prod_{i=1}^{k_1}
(a+(i-1)c-1)!\,
\prod_{i=1}^{k_2}
(p+(i-k_1-1)c-1)!
\prod_{r=1}^2\prod_{i=1}^{k_i}
\frac{(ic)!}{c!}\,.
\eean
\bea
&&
=  (-1)^{k_1+k_2} (-1)^{ (k_{1}k_2-k_2(k_2+1)/2)c} 
 \,  \frac{(k_2c)!}{(c!)^{k_2}}\,
 \prod_{j=1}^{k_1} 
\frac{(ic)!}{c!}\,\frac{(a+(j-1)c-1)!\,(b+(j-1)c)!}
{(a+b + (k_1+j-2)c-p)!},
\eea
where we used the cancellation Lemma \ref{lem ca} in the last step.
Hence
\\
 $I_{0,0,0}(a, b_1, (k_1-k_2+1)c, c)  = 
R^I_{\bs k}(a, b_1, (k_1-k_2+1)c, c)$ and Lemma \ref{lem int I} is proved.  
\end{proof}

Comparing equations \eqref{B1}, \eqref{B2} and the formula for
$R^I_{\bs k}(a, b_1, b_2, c)$, we conclude that 
the functions $I_{0,0,0}(a, b_1, b_2, c)$ and $R^I_{\bs k}(a, b_1, b_2, c)$
on $\mc A^I_{\bs k}$ satisfy the same difference equations
with respect to 
the   shifts of variables
$b_1\to b_1-1$ and $b_2\to b_2-1$ and are equal if $b_2$ takes its minimal value
$(k_1-k_2+1)c$. This implies Theorem \ref{thm I}, cf. Lemmas \ref{lem verti}, \ref{lem convex}
and Corollary~\ref{cor 5.7}.
\end{proof}

\subsection{Proof of Theorem \ref{thm main n} for arbitrary $n>2$}
\label{sec sketch}

The proof is parallel to the proof of Theorem \ref{thm main n} for $n=2$.

Analogously to the proof of Theorem \ref{thm relp},
 consider the Lie algebra $\frak{sl}_{n+1}$ and its representations
$V_{\la_1}^{\F_p}$ and $ V_{\la_2}^{\F_p}$ over $\F_p$
with   highest weights
\bea
\la_1= -\frac{a}{c+p}\,\om_1 \,,
\qquad
\la_2= -\frac{b_1}{c+p}\,\om_1 -\dots - \frac{b_n}{c+p}\,\om_n \,
\eea
and highest weight vectors $v_1,\,v_2$. Consider the PBW basis $\mc B = (u)$ of the weight subspace
$V_{\la_1}^{\F_p}\ox V_{\la_2}^{\F_p}[\la_1+\la_2-\sum_{i=1}^n k_i\si_i]$ like in
the proof of Theorem \ref{thm relp}.
 We distinguish two elements of that basis:
\bea
&&
u_1 = \frac{ f_1^{k_1-k_2}[f_1,f_2]^{k_2-k_3}\dots [f_1,[f_2, \dots, [f_{n-1},f_n]\dots]]^{k_n}v_1
\ox v_2}{ (k_1-k_2)!\,(k_2-k_3)!\,\dots \,(k_n)!}\,,
\\
&&
u_2 = \frac{ f_1^{k_1}v_1 \ox f_2^{k_2}\dots f_n^{k_n}
v_2}{ (k_1)!\,(k_2)!\,\dots\,(k_n)!}\,.
\eea
For $n=2$ these vectors are the vectors $v_{0,0,0}$ and $ v_{0,k_2,0}$ in the proof of Theorem \ref{thm relp}.

\vsk.2>

To any basis vector $u\in \mc B$ we assign the weight function $W_u(t)$ defined in \cite[Section 6.1]{RSV}, here
$t$ is the collection of variables defined in  \eqref{def t}. Then we consider  the $\F_p$-integrals
\bea
I_u(a,b,c) = \int_{[\bs k]_p} \Phi(t,a,b,c) W_u(t)\,dt.
\eea 
It follows from the formulas for the weight functions that 
\bea
I_{u_2}(a,b,c) = S_{\bs k}(a-1,b_1,b_2-1,\dots,b_n-1,c),
\eea
 cf. \eqref{IS}.
It is known from the theory of KZ equations  that the vector
 \bea
 I(a,b,c) = \sum_{u\in \mc B} I_u(a,b,c)\, u
 \eea
is a singular vector in 
$V_{\la_1}^{\F_p}\ox V_{\la_2}^{\F_p}[\la_1+\la_2-\sum_{i=1}^n k_i\si_i]$.
From the singular vector condition it  follows that 
\bean
\label{IBI}
I_{u_2}(a,b,c)= B_0(a,b,c)\,I_{u_1}(a,b,c),
\eean
where $B_0(a,b,c)$ is  an explicit expression like in \eqref{II}.

Then consider the vector $I_{u_1}(a,b,c) u_1$ of the one-dimensional weight subspace
$V_{\la_1}^{\F_p}[\la_1+\la_2-\sum_{i=1}^n k_i\si_i]$. That vector satisfies the dynamical equations
defined in \cite{TV3}. The dynamical equations take the form
\bean
\label{dyne n}
I_{u_1}(a,b_1,\dots,b_i-1,\dots,b_n,c) = B_i(a,b,c) I_{u_1}(a,b,c), \qquad i=1,\dots,n,
\eean
where $B_i(a,b,c)$ are explicit products like in 
\eqref{B1} and \eqref{B2}.

Equation \eqref{IBI} and difference equations \eqref{dyne n} imply that the two functions
$S_{\bs k}(a,b,c)$ and $R_{\bs k}(a,b,c)$, defined on $\mc A_{\bs k}$,
satisfy the same difference equations with respect to the shift
of variables $b_i\to b_i-1$ for $i=1,\dots,n$.
By Corollary \ref{cor ind} we also know that the two functions are equal at the
distinguished  point
\bea
(a,p-1-(a+(k_1-1)c), (k_1-k_2+1)c-1,\dots,(k_{n-1}-k_n+1)c-1,c)\ \in\ \mc A_{\bs k}\,.
\eea
This implies that the two functions are equal (cf. Corollary \ref{cor 5.7})
and Theorem \ref{thm main n} is proved for any $n$.
The details of the proof will be published elsewhere.

\bigskip

\end{document}